\def\cleartheorem#1{%
    \expandafter\let\csname#1\endcsname\relax
    \expandafter\let\csname c@#1\endcsname\relax
}
\def\clearthms#1{ \@for\tname:=#1\do{\cleartheorem\tname} }
\numberwithin{equation}{section}
\declaretheorem[Refname={Theorem,Theorems}]{theorem}
\numberwithin{theorem}{section} 
\declaretheorem[style=definition,numberlike=theorem,Refname={Definition,Definitions}]{definition}
\declaretheorem[style=definition,numberlike=theorem,Refname={Assumption,Assumptions}]{assumption}
\declaretheorem[style=definition,numberlike=theorem,Refname={Remark,Remarks}]{remark}
\declaretheorem[numberlike=theorem,Refname={Lemma,Lemmas}]{lemma}
\declaretheorem[name=Proposition,numberlike=theorem,Refname={Proposition,Propositions}]{proposition}
\newcommand{\qed}{\hfill\BlackBox\\[2mm]}
\DeclareMathOperator{\e}{e} 
\DeclareMathOperator*{\argmin}{arg\,min}
\DeclarePairedDelimiterX\Set[2]{\lbrace}{\rbrace}%
{ #1 \,:\, #2 }                                         
\DeclarePairedDelimiterX\inprod[2]{\langle}{\rangle}%
{ #1 , #2 }                                             
\newcommand{\R}{\mathbb{R}} 
\newcommand{\N}{\mathbb{N}} 
\newcommand{\T}{\mathsf{T}} 
\newcommand{\ML}{\textup{ML}}
\newcommand{\CV}{\textup{CV}}
\newcommand{\Hel}{\textup{Hel}}
\newcommand{\MAP}{\textup{MAP}}
\DeclarePairedDelimiterX{\infdivx}[2]{(}{)}{%
  #1\;\delimsize\|\;#2%
}
\newcommand{\GP}{\textup{GP}}
\begin{document}

\title{Maximum Likelihood Estimation in Gaussian Process Regression is Ill-Posed}

\author{\name Toni Karvonen \email toni.karvonen@helsinki.fi \\
  \addr Department of Mathematics and Statistics \\
  University of Helsinki\\
  PL 56 (Pietari Kalmin katu 5) \\
  00014 Helsingin yliopisto, Finland
       \AND
       \name Chris J.\ Oates \email chris.oates@ncl.ac.uk \\
       \addr School of Mathematics, Statistics and Physics \\
       Newcastle University \\
       Newcastle upon Tyne, NE1 7RU, United Kingdom
}
\editor{Marc Peter Deisenroth}

\maketitle

\begin{abstract}
\noindent Gaussian process regression underpins countless academic and industrial applications of machine learning and statistics, with maximum likelihood estimation routinely used to select appropriate parameters for the covariance kernel.
However, it remains an open problem to establish the circumstances in which maximum likelihood estimation is well-posed, that is, when the predictions of the regression model are insensitive to small perturbations of the data.
This article identifies scenarios where the maximum likelihood estimator \emph{fails} to be well-posed, in that the predictive distributions are not Lipschitz in the data with respect to the Hellinger distance.
These failure cases occur in the noiseless data setting, for any Gaussian process with a stationary covariance function whose lengthscale parameter is estimated using maximum likelihood.
Although the failure of maximum likelihood estimation is part of Gaussian process folklore, these rigorous theoretical results appear to be the first of their kind.
The implication of these negative results is that well-posedness may need to be assessed \emph{post-hoc}, on a case-by-case basis, when maximum likelihood estimation is used to train a Gaussian process model.
\end{abstract}

\begin{keywords}
  Gaussian processes, maximum likelihood estimation, ill-posedness, stationary kernels
\end{keywords}

\section{Introduction} \label{sec:intro}

Gaussian process regression is a popular tool used to construct a predictive model for a response variable as a function of one or more covariates of interest. 
As a strict generalisation of classical linear regression, and with the support of production-level software, Gaussian process regression has found myriad applications in both the academic and industrial contexts.
The success of Gaussian process regression, both in terms of predictive performance and quality of uncertainty quantification, is contingent on the use of a suitable covariance kernel~$K$ for the Gaussian process model.
This is often achieved by choosing $K$ from a parametric set~$\{K_\theta\}_{\theta \in \Theta}$ of candidate covariance kernels, with the parameter $\theta \in \Theta$ being selected based on the training data set.
The predictive performance of Gaussian processes is well-understood in a variety of asymptotic settings~\citep[e.g.,][]{Stein1999, Anderes2010, VaartZanten2011, Bachoc2017, Karvonen2020}.
However, not much is known about the non-asymptotic setting when~$\theta$ is estimated from a training data set.
In particular, it is an open problem to understand when the predictions from a Gaussian process model are \emph{well-posed}, in the sense that the predictions of the model are either continuous in the training data set or insensitive to small perturbations of the training data set.
Theoretical understanding of well-posedness is urgently needed to support the use of Gaussian process regression in sensitive applications, such as in mine gas safety monitoring~\citep{Dong2012}, malicious maritime activity detection~\citep{Kowalska2012}, and climate modelling~\citep{Revell2018}, where the reliability and robustness of predictions is critical.

Several methods exist to estimate $\theta$, including maximum likelihood estimation \citep{mardia1984maximum}, maximum \textit{a posteriori} estimation \citep[e.g.,][]{cunningham2008fast}, cross-validation \citep{geisser1979predictive}, Bayesian inference \citep{mackay1992bayesian}, kernel flows \citep{Chen2021}, and various bespoke approaches, for example when $\theta$ are the parameters of a neural network in a deep kernel~\citep{wilson2016deep}.
Among these, maximum likelihood estimators $\theta_\ML$ are arguably most widely used, for example being the default approach in Gaussian process software \citep{GPML,scikit-learn,roustant2012dicekriging,gpy2014,GPflow2017}.
In addition to statistics and machine learning, maximum likelihood is also occasionally used in the applied mathematical literature to construct a kernel interpolant; see \citet[Section~14.3]{FasshauerMcCourt2015} and \citet{Cavoretto2021} for recent examples.
Compared to other approaches, maximum likelihood is attractive due to the absence of any additional degrees of freedom (e.g., cross-validation requires a choice for how data are partitioned) and the possibility for automatic gradient-based optimisation.

Suppose that the data are modelled as being generated by a Gaussian process defined by a mean function $m$ and a positive-definite covariance kernel $K_\theta$.
For a noiseless training data set $Y = (y_1, \ldots, y_n) \in \R^{n}$, associated to a set $X$ of distinct covariates $x_1, \ldots, x_n \in \R^{d}$, a maximum likelihood estimator $\theta_\ML$ of $\theta$ satisfies
\begin{equation} \label{eq:mle-intro}
  \theta_\ML \in \argmin_{ \theta \in \Theta } \ell(\theta \mid Y) \:\: \text{ with } \:\:  \ell( \theta \mid Y) = Y_m^\T K_\theta(X, X)^{-1} Y_m + \log \det K_\theta(X, X),
\end{equation}
where $K_\theta(X, X) = (K_\theta(x_i, x_j))_{i,j=1}^n \in \R^{n \times n}$ is the positive-definite covariance matrix and $Y_m = (y_i - m(x_i))_{i=1}^n \in \R^n$.
See \citet[Section~6.4]{Stein1999} or \citet[Section~5.4.1]{RasmussenWilliams2006}.
Despite the simple form of the optimisation problem~\eqref{eq:mle-intro}, there is only limited understanding of the behaviour of $\theta_\ML$ in the \emph{deterministic interpolation} regime, where the data $y_i = f(x_i)$ are, in truth, generated from a fixed but unknown function $f \colon \R^d \to \R$.
An important open problem is to understand the behaviour of $\theta_\ML$ in terms of the data-generating function $f$ and the set of covariates, and the implications of this behaviour for predictions produced by the Gaussian process model.

The deterministic interpolation regime represents a simple but important instance of Gaussian process regression widely used in, for example, emulation of computer experiments~\citep{Sacks1989, KennedyOHagan2002}, probabilistic numerical computation~\citep{Diaconis1988, Cockayne2019, Hennig2022}, and Bayesian optimisation~\citep{Snoek2012}.
However, this regime is challenging to analyse, and results concerning maximum likelihood estimation appear limited to the asymptotic analyses.
\citet{XuStein2017}; \citet{Karvonen2020}; and \citet{Wang2021} exploited a closed form for the maximum likelihood estimator of a \emph{scale} parameter (i.e., $\theta = \{\sigma\}$ and $K_\theta = \sigma^2 K$) to analyse its behaviour as $n \to \infty$ in the fixed domain setting where the covariates $\{x_i\}_{i=1}^\infty$ are dense in a compact subset of $\R^d$.
In a similar manner, \citet{Karvonen2019-MLSP} analysed maximum likelihood estimation of the scale and \emph{lengthscale} parameters (i.e., $\theta = \{\sigma, \lambda\}$ and $K_\theta(x,x') = \sigma^2 K(x/\lambda , x' / \lambda)$) for a particular non-stationary Ornstein--Uhlenbeck process, while \citet{Karvonen2022} obtained asymptotic lower bounds on estimates of the smoothness parameter of the Matérn model.
The non-asymptotic behaviour of maximum likelihood estimation in the deterministic interpolation regime has yet to be studied.

\subsection{Contributions}

The principal contributions of this work are to demonstrate how the concept of well-posedness can be applied to Gaussian process interpolation and how rigorous theoretical analysis of well-posedness can be performed.
We prove that, in the deterministic interpolation regime, maximum likelihood estimation of a lengthscale parameter can fail to be well-posed.
We emphasise that this is a \emph{non-asymptotic} (i.e., the number $n$ of observations is kept fixed) result, in contrast to earlier work, and is based on the observation that the maximum likelihood estimate $\lambda_\textup{ML}$ of a lengthscale parameter $\lambda$ is infinite if the observations differ from the prior mean function by a constant vertical shift.

To be more precise, consider a Gaussian process with prior mean function $m$ and stationary covariance function of the form
\begin{equation} \label{eq:stationary-kernel-intro}
  K_\lambda(x, y) = \Phi\bigg( \frac{x-y}{\lambda} \bigg) \quad \text{ for } \quad x, y \in \R^d,
\end{equation}
where the lengthscale parameter $\lambda > 0$ determines the spatial correlation distance of the resulting Gaussian random field.
If the function $\Phi \colon \R^d \to \R$ is continuous, the covariance function~\eqref{eq:stationary-kernel-intro} tends pointwise to the constant $\Phi(0)$ as $\lambda \to \infty$.
Thus, if the data are approximately shifted from the mean function $m$ by a constant, it is intuitive that a large value will be taken by the maximum likelihood estimate~$\lambda_\ML$.
It is rigorously proven in this article (see \Cref{thm:main-theorem}) that if (a) $n \geq 2$ is fixed, (b) the function $\Phi$ in~\eqref{eq:stationary-kernel-intro} satisfies certain mild regularity conditions, and (c) there is a constant $c \in \R$ such that the data are $m$-\emph{constant} in that
\begin{equation} \label{eq:constant-data-intro}
  y_i = m(x_i) + c \quad \text{ for } \quad i = 1, \ldots, n,
\end{equation}
then $\lambda_\textup{ML} = \infty$.
This result can be viewed as a generalisation of the simple fact that maximum likelihood estimation fails if the data are fully explained by the prior mean: if $y_i = m(x_i)$ for every $i = 1,\ldots,n$, the first term of $\ell(\lambda \mid Y)$ in~\eqref{eq:mle-intro} is zero and thus the estimate of $\lambda$ must be infinite because $K_\lambda(X, X)$ tends to a singular matrix if and only if $\lambda \to \infty$.
From $\lambda_\textup{ML} = \infty$ it follows that the pointwise predictions produced by the fitted Gaussian process model assign all probability to a single point (see \Cref{thm:flat-limit}).
This phenomenon is undesirable, as it is not reasonable to claim infinite precision from a finite data set (which could, in this case, involve as few as $n=2$ values being observed).
Secondly, we prove that if the data are \emph{not} $m$-constant, then $\lambda_\ML < \infty$ so that the predictive distributions are non-degenerate.
Using these results we show in \Cref{sec:ill-posed} that maximum likelihood estimation is not well-posed in general, in the sense that the resulting predictive distributions are not Lipschitz in the data with respect to the Hellinger distance, which means that predictive inference can be sensitive to small perturbations of the data set.\footnote{Note that this notion of well-posedness of a parameter estimation method is stronger than that of its \emph{robustness}, defined by \citet[Section~3]{GuWangBerger2018} essentially as the impossibility of obtaining a singular covariance matrix.}

A constant mean shift in~\eqref{eq:constant-data-intro} is \emph{not} a pathological case in the deterministic interpolation context, though it does highlight one sense in which mathematical analysis may be easier when the data are assumed to come from a stochastic process, since a constant mean shift may be neglected as a measure zero event in that setting.
For example, Gaussian process regression has been used to explore \emph{discrepancy} between computer models \citep{brevault2020overview}, where a constant mean shift between the output of two computer models for the same phenomenon could reasonably be expected.
Similarly, in probabilistic numerical computation one could encounter a constant mean shift \citep[e.g., when modelling an integrand in Bayesian cubature, if that integrand is in fact constant;][]{briol2019probabilistic}, or in applications of Bayesian optimisation, where the data are obtained in a region where the objective function is constant.
The role of this article is therefore to highlight an important failure mode of maximum likelihood estimation in Gaussian process interpolation and, in doing so, to underscore the need for an improved theoretical understanding of parameter estimation in general.

It would be tempting to attribute these failings to the simplicity of the maximum likelihood estimator and the modelling choices that we consider, for surely something more sophisticated ought to render the problem well-posed.
\Cref{sec:what-does-not-help} demonstrates that there may not exist an easy solution in the deterministic interpolation regime, at least if the tractability of a Gaussian process model is to be retained.
Namely, we prove the following extensions:
\begin{enumerate}
\item[(i)] A certain cross-validation estimator of the scale parameter shares the undesirable property of producing infinite lengthscale estimates when the data are $m$-constant.
\item[(ii)] Inclusion of a parametric prior mean function, which too is estimated from the data (i.e., as in universal kriging), does not prevent ill-posedness.
\item[(iii)] Simultaneous maximum likelihood estimation of the scale and lengthscale parameters does not prevent ill-posedness.
\end{enumerate}

What \emph{does} guarantee well-posedness is the inclusion of a regularisation or a nugget term, which corresponds to an assumption by the user that the data are corrupted by additive Gaussian noise.
It is intuitive that the maximum likelihood estimator should be more well-behaved if the data-generating process is noisy as it may be merely by chance that the data set is $m$-constant.
Less intuitive is the numerical evidence in \Cref{sec:regularisation-nugget}, which indicates that $\lambda_\ML$ is infinite even in the regularised setting as long as the regularisation term is sufficiently large.
A possible interpretation of this observation is that for sufficiently large assumed noise level \emph{any} data could have been plausibly generated by a constant mean shift of the prior mean.
The use of regularisation can hardly be considered a proper solution to ill-posedness because the interpolation property of the conditional process, desirable in many applications, is lost and an influential degree of freedom is introduced.
As demonstrated in \Cref{sec:regularisation-prior}, another option for guaranteeing that the lengthscale estimates are always finite is to place a hyperprior on the lengthscale and use \emph{maximum a posteriori} estimation.
However, because the hyperprior determines the estimator, this approach is rather arbitrary and does not lend itself well to automation in software.

Most of our results apply to Matérn-type covariance functions whose Fourier transforms decay polynomially (see \Cref{assumption:sobolev-kernel}).
\Cref{sec:generalisation} discusses generalisations of this Fourier assumption, lengthscale estimation for product kernels, and the use of general linear information, such as derivative data.
Our proofs are predominantly based on reproducing kernel Hilbert space (RKHS) techniques and approximation theory in Sobolev spaces.
Complete proofs are relegated to \Cref{sec:proofs}.
No familiarity with RKHSs or Sobolev spaces is required to understand the statements of our main results and, outside the proofs, it is only in \Cref{sec:generalisation} that these concepts are used.
Practical and theoretical implications of our results are discussed in \Cref{sec:conclusion}.

\subsection{Related Literature}

For fixed kernel parameters $\theta$, the mathematical properties of Gaussian process interpolation as $n \to \infty$ can be deduced from the equivalent perspective of optimal interpolation in an RKHS.
See \citet{Fasshauer2011, Scheuerer2013}; and \citet{Kanagawa2018} for reviews on this equivalence.
The setting where the data-generating function $f$ is randomised has received considerable attention and a large number of results have been obtained. 
Consistency and asymptotic normality results for maximum likelihood estimation of scale and lengthscale parameters can be found in, for example, \citet{Ying1991, Zhang2004, Loh2005, Du2009, Anderes2010, Kaufman2013, Bachoc2013}; and \citet{Bevilacqua2019}.
The use of maximum likelihood to estimate \textit{smoothness} parameters (i.e., controlling the differentiability of the Gaussian process sample paths) has also been considered, notably by \citet{Szabo2015, Knapik2016, Chen2021}; and \citet{Karvonen2022}.
A well known issue with maximum likelihood estimation occurs when $\theta$ is non-identifiable, as can happen when one attempts to simultaneously estimate scale, lengthscale, and smoothness parameters in the Mat\'{e}rn covariance model~\citep{Zhang2004}.
However, a lack of identifiability does not affect the predictions produced by the model or the combinations of parameters that can be identified (the so-called \emph{microergodic} parameters).
These related works, whilst providing useful insight and powerful theoretical tools and techniques, do not apply in the non-asymptotic deterministic interpolation regime where $\theta$ is estimated using maximum likelihood.

\section{Maximum Likelihood Estimation Is Not Well-Posed} \label{sec:main}

Given any function $f$ and a set $X$ of points $x_1, \ldots, x_n$ in the domain of $f$, we use $f(X)$ to denote the $n$-vector $(f(x_1), \ldots, f(x_n))$.
All vectors in this article are to be understood to be in column format.

\subsection{Gaussian Process Interpolation}

In Gaussian process interpolation, the data are modelled as discrete and noiseless observations from a Gaussian process sample path.
A Gaussian process $f_\GP \sim \mathrm{GP}(m, K)$ on $\R^d$ is a stochastic process characterised by a mean function $m \colon \R^d \to \R$ and a covariance kernel $K \colon \R^d \times \R^d \to \R$:
\begin{equation*}
  \mathbb{E}[f_\GP(x)] = m(x) \quad \text{ and } \quad \mathrm{Cov}[ f_\GP(x), f_\GP(y) ] = K(x, y)
\end{equation*}
for all $x, y \in \R^d$.
Let $K(X, X) = (K(x_i, x_j))_{i,j=1}^n \in \R^{n \times n}$.
That $f_\GP$ is Gaussian means that all its finite-dimensional distributions are normally distributed, which is to say that
\begin{equation*}
  f_\GP(X) \sim \mathrm{N}( m(X), K(X, X) )
\end{equation*}
for any finite set of points $X \subset \R^d$.
Throughout this article the covariance kernel is assumed to be (strictly) \emph{positive-definite}, which means that
\begin{equation} \label{eq:pd-condition}
  \sum_{i=1}^n \sum_{j=1}^n a_i a_j K(x_i, x_j) > 0
\end{equation}
for any $n \in \N$, any non-zero vector $a = (a_1, \ldots, a_n)$, and any distinct points $x_i \in \R^d$.
This implies that the covariance matrix $K(X, X)$ is positive-definite and non-singular if $X$ consists of distinct points.
A covariance kernel is \emph{stationary} if there is a function $\Phi \colon \R^d \to \R$ such that
\begin{equation} \label{eq:stationary-kernel}
  K(x, y) = \Phi(x - y) \quad \text{ for all } \quad x, y \in \R^d.
\end{equation}
If $\Phi$ is to yield a positive-definite covariance kernel, it is necessary that $\Phi(0) > 0$ because otherwise the condition~\eqref{eq:pd-condition} fails for $n = 1$.
For the purposes of this article, the Matérn kernels constitute the most important class of stationary covariance functions.
Let $\sigma$, $\lambda$, and $\nu$ be positive.
A Matérn kernel with scale $\sigma$, lengthscale $\lambda$, and smoothness $\nu$ is given by
\begin{equation} \label{eq:matern}
  K(x, y) = \sigma^2 \frac{2^{1-\nu}}{\Gamma(\nu)} \bigg( \frac{\sqrt{2\nu} \norm[0]{x - y}}{\lambda} \bigg)^{\nu} \mathcal{K}_\nu \bigg( \frac{\sqrt{2\nu} \norm[0]{x - y}}{\lambda} \bigg),
\end{equation}
where $\Gamma$ is the Gamma function and $\mathcal{K}_\nu$ the modified Bessel function of the second kind.

Suppose that a noiseless training data set $Y = (y_1, \ldots, y_n) \in \R^n$ associated to a set $X$ of distinct covariates $x_1, \ldots, x_n \in \R^d$ has been obtained and let $Y_m = Y - m(X)$.
Define
\begin{equation*}
  K(x, X) = (K(x,x_i))_{i=1}^n \in \R^n \quad \text{ and } \quad K(X, y) = (K(x_i, y))_{i=1}^n \in \R^{n}
\end{equation*}
for any $x, y \in \R^d$.
The mean and covariance functions of the conditional Gaussian process $f_\GP \mid Y$ are obtained from the well known expressions
\begin{equation} \label{eq:conditional-mean}
  \mu(x) = \mathbb{E}[ f_\GP(x) \mid Y] = m(x) + K(x, X)^\T K(X, X)^{-1} Y_m
\end{equation}
and
\begin{equation} \label{eq:conditional-variance}
  P(x, y)^2 = \mathrm{Cov}[ f_\GP(x), f_\GP(y) \mid Y] = K(x, y) - K(x, X)^\T K(X, X)^{-1} K(X, y).
\end{equation}
We use the simplified notation $P(x, x)^2 = P(x)^2$ for the conditional variance.
It is often convenient, especially in our proofs, to use the function
\begin{equation} \label{eq:conditional-mean-s}
  s(x) = K(x, X)^\T K(X, X)^{-1} Y_m
\end{equation}
and write the conditional mean as $\mu = m + s$.
The assumption that there is no noise means that we are in the deterministic interpolation regime where $\mu$ is an interpolant to the data, which is to say that $\mu(x_i) = y_i$ and $P(x_i) = 0$ for every~$i = 1,\ldots,n$.
Crucially, this assumption allows us to leverage well known equivalences, reviewed in \Cref{sec:rkhs}, between Gaussian process interpolation and kernel-based minimum-norm interpolation~\citep[e.g.,][]{Scheuerer2013, Kanagawa2018}.

\subsection{Maximum Likelihood Estimation}

Maximum likelihood estimation is the most common method used to select parameters $\theta \in \Theta$ of a parametrised covariance kernel $K_\theta$.
Under the Gaussian process model $\mathrm{GP}(m, K_\theta)$, the probability density function of the data $Y$ given $\theta$ is~\citep[e.g.,][Section~5.4.1]{RasmussenWilliams2006}
\begin{equation} \label{eq:likelihood}
  \frac{1}{\det(2\pi K_\theta(X, X))^{1/2}} \exp\bigg(\!\!-\frac{1}{2} Y_m^\T K_\theta(X, X)^{-1} Y_m \bigg).
\end{equation}
Maximising~\eqref{eq:likelihood} over $\Theta$ is equivalent to minimising
\begin{equation} \label{eq:ell-function}
  \ell( \theta \mid Y ) = Y_m^\T K_\theta(X, X)^{-1} Y_m + \log \det K_\theta(X, X),
\end{equation}
which we call the \emph{modified log-likelihood function}, being the log-likelihood function up to subtraction and multiplication by negative constants.
That is, any maximum likelihood estimate (which may not be unique if $\ell$ is multimodal) of $\theta$ satisfies
\begin{equation*}
  \theta_\ML \in \argmin_{\theta \in \Theta} \ell( \theta \mid Y ).
\end{equation*}
The two terms that comprise the modified log-likelihood function~\eqref{eq:ell-function} are usually called the (negative) \emph{data-fit} and \emph{model complexity} terms, respectively.
Throughout the article we use subscripts to denote that various quantities depend on the kernel parameters.
For example, the parameter-dependent conditional mean and covariance are
\begin{align*}
  \mu_\theta(x) &= m(x) + K_\theta(x, X)^\T K_\theta(X, X)^{-1} Y_m, \\
  P_\theta(x, y)^2 &= K_\theta(x, y) - K_\theta(x, X)^\T K_\theta(X, X)^{-1} K_\theta(X, y).
\end{align*}

\begin{figure}

  \centering
  \includegraphics[width=\textwidth]{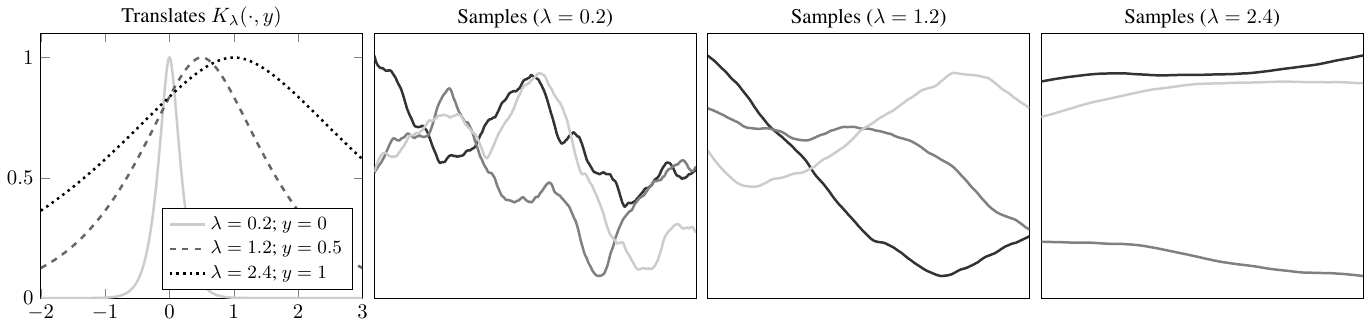}
  \caption{Translates of Matérn kernels in~\eqref{eq:matern}, as well as sample paths from the corresponding Gaussian processes on the domain $[0, 1]$, with $\sigma = 1$, $\nu = 3/2$, and three different~$\lambda$.} \label{fig:matern-samples}
\end{figure}

We are interested in estimation of the lengthscale parameter $\lambda > 0$ which parametrises any stationary kernel of the form~\eqref{eq:stationary-kernel} as
\begin{equation*}
  K_\lambda(x, y) = \Phi\bigg( \frac{x - y}{\lambda} \bigg)
\end{equation*}
and determines the spatial correlation distance of the resulting Gaussian process (see \Cref{fig:matern-samples}).
A maximum likelihood estimate of $\lambda$ therefore satisfies
\begin{equation*}
  \lambda_\ML \in \argmin_{ \lambda > 0 } \ell( \lambda \mid Y ) = \argmin_{ \lambda > 0 } \big\{ Y_m^\T K_\lambda(X, X)^{-1} Y_m + \log \det K_\lambda(X, X) \big\}.
\end{equation*}
Specifically, we are interested in rigorously proving that $\lambda_\ML = \infty$ in certain cases.
As~$\lambda$ increases, the kernel $K_\lambda$ tends pointwise to $\Phi(0)$, which is not positive-definite, and the covariance matrices in the predictive mean~\eqref{eq:conditional-mean} and covariance~\eqref{eq:conditional-variance} become singular if $n > 1$.
However, the limits $\lim_{\lambda \to \infty} \mu_\lambda$ and $\lim_{\lambda \to \infty} P_\lambda(x, y)^2$ do exist for most commonly used kernels, such as Matérns and the Gaussian, and it is in this limiting sense that one should interpret the conditional moments when $\lambda = \infty$.
This is discussed in more detail in \Cref{sec:flat-limit}.

Our results on the behaviour of $\lambda_\ML$ (and, later, other estimators of $\lambda$) show that Gaussian process interpolation fails, producing degenerate predictive distributions, when the data are $m$-constant and the function $\Phi$ has a polynomially decaying Fourier transform.

\begin{definition}[Constant data] \label{def:constant-data} Given a mean function $m$, we say that the data $Y$ are $m$-\emph{constant} if there is a constant $c \in \R$ such that
  \begin{equation*}
    Y_m = Y - m(X) = (c, \ldots, c) \in \R^n.
  \end{equation*}
\end{definition}

Let $\widehat{f}(\xi) = \int_{\R^d} g(x) \mathrm{e}^{-\mathrm{i} \xi^\T x} \dif x $ denote the Fourier transform of an integrable function $f \colon \R^d \to \R$.
We use the following assumption on the rate of decay of the Fourier transform of a stationary kernel.

\begin{assumption}[Stationary Sobolev kernel] \label{assumption:sobolev-kernel}
  There are a continuous and integrable function $\Phi \colon \R^d \to \R$ and constants $C_1$, $C_2 > 0$ and $\alpha > d/2$ such that $K(x, y) = \Phi(x - y)$ for all $x, y \in \R^d$ and
  \begin{equation} \label{eq:Phi-bounds}
    C_1 (1 + \norm[0]{\xi}^2)^{-\alpha} \leq \widehat{\Phi}(\xi) \leq C_2 (1 + \norm[0]{\xi}^2)^{-\alpha}
  \end{equation}
  for all $\xi \in \R^d$.
\end{assumption}

If $d=1$ and \Cref{assumption:sobolev-kernel} holds for $\alpha = p + 1 \in \N$, the kernel is $p$ times differentiable in that the derivative 
\begin{equation*}
  \frac{\partial^{2p}}{\partial x^{p} \partial y^p} K(x, y) \Bigr|_{\substack{x=0 \\ y=0}} = (-1)^{p} \Phi^{2p}(0)
\end{equation*}
exists.
As a consequence, the process $f_\GP \sim \mathrm{GP}(m, K)$ is $p$ times mean-square differentiable~\citep[Section~2.4]{Stein1999}.
That a kernel satisfying~\eqref{eq:Phi-bounds} is called a \emph{Sobolev kernel} is because its RKHS is norm-equivalent to the Sobolev space $W_2^\alpha(\R^d)$ of order $\alpha$.
The norm-equivalence is a crucial ingredient in several of our proofs and is reviewed, together with Sobolev spaces, in more detail in \Cref{sec:sobolev-spaces}.
One can also prove that the sample paths of $f_\GP$ are elements of certain Sobolev spaces~\citep{Scheuerer2011, Steinwart2019, Henderson2022}.
The Fourier transform of the function
\begin{equation*}
  \Phi(z) = \sigma^2 \frac{2^{1-\nu}}{\Gamma(\nu)} \big( \sqrt{2\nu} \norm[0]{z} \big)^{\nu} \mathcal{K}_\nu \big( \sqrt{2\nu} \norm{z} \big), \quad z \in \R^d,
\end{equation*}
which defines a Matérn kernel in~\eqref{eq:matern}, is~\citep[e.g.,][p.\@~49]{Stein1999}
\begin{equation} \label{eq:matern-fourier-transform}
  \widehat{\Phi}(\xi) = \sigma^2 \frac{\Gamma(\nu + d/2)}{ \pi^{d/2} \Gamma(\nu) } (2\nu)^\nu \big( 2\nu + \norm[0]{\xi}^2 \big)^{-(\nu+d/2)}.
\end{equation}
Therefore a Matérn kernel with smoothness $\nu > 0$ satisfies \Cref{assumption:sobolev-kernel} with $\alpha = \nu + d/2$.

With these preliminaries we are ready to state the main result of this article on the behaviour of maximum likelihood estimates of $\lambda$. The result is illustrated in \Cref{fig:log-likelihood-functions}.

\begin{restatable}[Maximum likelihood estimation]{theorem}{maintheorem} \label{thm:main-theorem}
  Suppose that the kernel $K$ satisfies \Cref{assumption:sobolev-kernel} and $n \geq 2$.
  If the data $Y$ are $m$-constant, then
  \begin{equation} \label{eq:mle-infinite}
    \lim_{ \lambda \to \infty} \ell(\lambda \mid Y) = -\infty \quad \text{ and } \quad \lambda_\ML = \infty.
  \end{equation}
  If the data $Y$ are not $m$-constant, then
  \begin{equation} \label{eq:mle-finite}
    \lim_{ \lambda \to \infty} \ell(\lambda \mid Y) = \infty \quad \text{ and } \quad \lambda_\ML < \infty.
  \end{equation}
\end{restatable}
\begin{proof}
  See \Cref{sec:proof-main}.
  The proof uses RKHS techniques to show that under \Cref{assumption:sobolev-kernel} the data-fit term, as a function of $\lambda$, (a)~is upper bounded if the data are $m$-constant and (b)~grows polynomially if the data are not $m$-constant, while the covariance matrix tends to the matrix consisting of $\Phi(0)$'s as $\lambda \to \infty$ and it can be shown that its log-determinant (i.e., the model complexity) tends to negative infinity with at most rate $-\log \lambda$.
\end{proof}

\begin{remark}
It would be very interesting and useful to obtain a more quantitative version of \Cref{thm:main-theorem} which would, for example, state that
\begin{equation} \label{eq:quantitative-bound}
  \lambda_\ML \geq g ( \mathrm{const}_m(Y) )
\end{equation}
for some measure $\mathrm{const}_m(Y)$ of how far $Y$ are from being $m$-constant (i.e., $\mathrm{const}_m(Y) = 0$ if and only if $Y$ are $m$-constant) and some decreasing function $g \colon (0, \infty) \to \R$ such that $g(r) \to \infty$ as $r \to 0$.
Unfortunately, the techniques we use to prove \Cref{thm:main-theorem} are not precise enough to prove any form of~\eqref{eq:quantitative-bound}.
\end{remark}

\begin{figure}

  \centering
  \includegraphics[width=\textwidth]{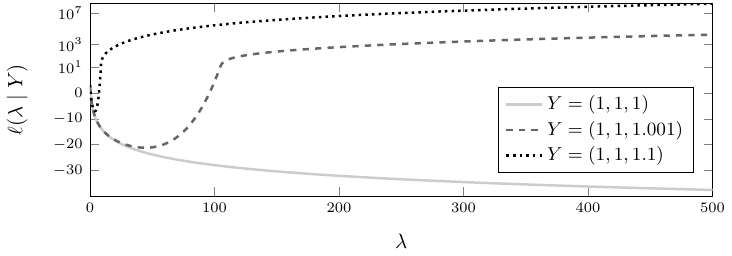}
  
  \caption{The modified log-likelihood function for the Matérn kernel~\eqref{eq:matern} with parameters $\sigma = 1$ and $\nu = 5/2$ given three different data vectors $Y$ obtained at the points $X = \{1, 1.2, 2\} \subset \R$. Note the non-linear $y$-axis.}
  \label{fig:log-likelihood-functions}
\end{figure}

\begin{remark}
  For simplicity, suppose that $m \equiv 0$ so that the data being $m$-constant means that $Y = (c, \ldots, c)$ for some $c \in \R$.
  Because the case $\lambda = \infty$ formally corresponds to a model with a constant kernel, one might be tempted to interpret \Cref{thm:main-theorem} as a special case of a general theorem which would state that $\theta_\ML$ takes the value $\bar{\theta}$ for which the data are fully explained by a single translate of $K_{\bar{\theta}}$.
  However, there can be no such theorem.
  For suppose that there are $\bar{\theta} \in \Theta$ and $i \in \{1, \ldots, n\}$ such that the data could have been generated by the translate of $K_{\bar{\theta}}$ at $x_i$. That is, $Y = a K_{\bar{\theta}}(x_i, X)$ for some $a \in \R$.
  Then
  \begin{equation*}
      \ell(\theta \mid Y) = Y^\T K_\theta(X, X)^{-1} Y + \log \det K_\theta(X, X)
  \end{equation*}
  would have to attain its minimum at $\theta = \bar{\theta}$.
  But because the data-fit term is non-negative and the model complexity term does not depend on the data, it is clear that a minimum can be attained at $\bar{\theta}$ only by ``chance'' or if $ \log \det K_{\bar{\theta}}(X, X) = -\infty$, which happens only if $K_{\bar{\theta}}(X, X)$ is singular or, in other words, if $K_{\bar{\theta}}$ is not a valid positive-definite kernel.
  That is, there can be no general theorem that $\theta_\ML = \bar{\theta}$ if it is required that $K_{\bar{\theta}}$ be a well-defined positive-definite kernel.
\end{remark}

Next we discuss the behaviour of the conditional mean and covariance and give a precise meaning to ill-posedness of Gaussian process interpolation that we have repeatedly alluded to.

\subsection{Conditional Mean and Variance in the Flat Limit} \label{sec:flat-limit}

Given \Cref{thm:main-theorem}, the question that arises is how the conditional mean~\eqref{eq:conditional-mean} and covariance~\eqref{eq:conditional-variance} behave if the data are $m$-constant.
As the kernel becomes constant for $\lambda = \infty$, the linear systems in the equations which define the conditional moments are singular if $n \geq 2$.
The sensible approach is therefore to consider the limits of $\mu_\lambda(x)$ and $P_\lambda(x, y)$ as $\lambda \to \infty$.
This \emph{flat limit} has been extensively studied during the past twenty years in the literature on radial basis function interpolation; see, for instance, \citet{Lee2015} or \citet{BarthelmeUsevich2021} and the references therein.
Flat limits have been recently considered in the context of Gaussian process interpolation by \citet{Barthelme2022}.
The conclusion of this body of research is that, under certain assumptions on the kernel and the covariate set geometry, the kernel-dependent term of the conditional mean in~\eqref{eq:conditional-mean-s} tends pointwise to (a)~a polynomial interpolant if the kernel is infinitely differentiable~\citep[Theorem~3.4]{LeeYoonYoon2007} or (b)~a polyharmonic spline interpolant if the kernel is finitely differentiable~\citep[Theorem~1]{Song2012}.
Interestingly, if the data are $m$-constant, we find that it is possible to present a simpler proof that is completely self-contained.

\begin{restatable}{theorem}{flatlimittheorem} \label{thm:flat-limit}
  Suppose that $K$ satisfies \Cref{assumption:sobolev-kernel} and $n \geq 1$.
  If $Y_m = (c, \ldots, c)$ for some $c \in \R$, then
  \begin{equation*}
    \lim_{ \lambda \to \infty } \mu_{\lambda}(x) = m(x) + c \quad \text{ and } \quad \lim_{\lambda \to \infty} \abs[0]{ P_\lambda(x, y) } = 0
  \end{equation*}
  for any $x, y \in \R^d$.
\end{restatable}
\begin{proof}
  See \Cref{sec:proofs-flat-limit}. 
\end{proof}

\Cref{thm:flat-limit} is illustrated in \Cref{fig:flat-limit}.
Since $\Phi$ is assumed continuous in \Cref{assumption:sobolev-kernel},  $\mu_\lambda(x)$ and $P_\lambda(x, y)$ are continuous functions of $\lambda$ for any fixed $x, y \in \R^d$ (which is proved similarly to \Cref{lemma:continuity}).
Therefore \Cref{thm:flat-limit} justifies writing
\begin{equation} \label{eq:flat-limit-interpretation}
  \mu_{\lambda = \infty}(x) = m(x) + c \quad \text{ and } \quad P_{\lambda = \infty}(x, y) = 0.
\end{equation}

\begin{figure}[t]
  \centering
  \includegraphics{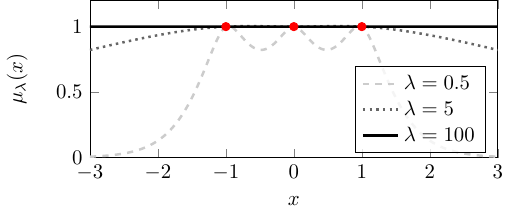}
  \caption{The conditional mean function for $\lambda \in \{1, 10, 100\}$ when $X = \{-1, 0, 1\} \subset \R$, $Y = (1, 1, 1)$, the prior mean is zero, and $K$ is the Matérn kernel~\eqref{eq:matern} with $\sigma = 1$ and $\nu = 3/2$.} \label{fig:flat-limit} 
\end{figure}

\subsection{Ill-Posedness} \label{sec:ill-posed}

By the classical definition of Hadamard, an inference or estimation problem is \emph{well-posed} if (i) a solution exists, (ii) the solution is unique, and (iii) the solution depends on continuously on the data.
If these conditions are not met, the problem is \emph{ill-posed}.
In the Bayesian inverse problems literature, where the solution is a posterior measure, the third condition is often strengthened to a requirement that the posterior be locally Lipschitz in the data with respect to the Hellinger distance~\citep[Section~4]{Stuart2010}.
One of the reasons that make Hellinger distance suitable in our context is that two distributions that are close in Hellinger distance are close also in mean and variance.
However, one may also consider other probability metrics~\citep[e.g.,][]{Latz2020}.
In this section we show that prediction using Gaussian process interpolation is not well-posed in the latter Lipschitz--Hellinger sense if the lengthscale parameter is set using maximum likelihood.
Bringing formal notions of well-posedness to bear on Gaussian process interpolation has not, to the best of our knowledge, previously been attempted.

Let $Q_1$ and $Q_2$ be two probability distributions on $\R^q$ that are absolutely continuous with respect a reference measure $\nu$ on $\R^q$ and let $q_1$ and $q_2$ denote their Radon--Nikodym derivatives with respect to $\nu$.
The squared Hellinger distance between $Q_1$ and $Q_2$ is
\begin{equation} \label{eq:Hellinger-definition}
  d_\Hel(Q_1, Q_2)^2  = \frac{1}{2} \int_{\R^q} \big( q_1(x)^{1/2} - q_2(x)^{1/2} \big)^2 \dif \nu(x).
\end{equation}
The Hellinger distance does not depend on the reference measure $\nu$, which means that for distributions that admit Lebesgue density functions we may set $\dif \nu(x) = \dif x$.
For univariate Gaussians $Q_1 = \mathrm{N}(\mu_1, \Sigma_1)$ and $Q_2 = \mathrm{N}(\mu_2, \Sigma_2)$, we have
\begin{equation} \label{eq:hellinger-gaussians}
  d_\Hel(Q_1, Q_2)^2 = 1 - \frac{\sqrt{2} (\Sigma_1 \Sigma_2)^{1/4}}{\sqrt{\Sigma_1 + \Sigma_2}} \exp\bigg( \!- \frac{(\mu_1 - \mu_2)^2}{4(\Sigma_1 + \Sigma_2)} \bigg).
\end{equation}
Let $Q(Y)$ stand for a posterior measure given an observed data vector $Y \in \R^n$.
The posterior is said to be well-posed if for every $\varepsilon > 0$ there exists $L > 0$ such that
\begin{equation} \label{eq:hellinger-lipschitz}
  d_\Hel\big( Q(Y), Q(Y') \big) \leq L \norm[0]{Y - Y'}
\end{equation}
for any data vectors $Y, Y' \in \R^n$ for which $\norm[0]{Y - Y'} \leq \varepsilon$.

Let us consider the Gaussian process predictive distribution at some unobserved point $x_0 \notin X$ as the posterior and set
\begin{equation} \label{eq:gp-predictive}
  Q_\GP(Y) = \mathrm{N}( \mu_{\lambda_\ML(Y)}(x_0), P_{\lambda_\ML(Y)}(x_0)^2 ),
\end{equation}
where we use $\lambda_\ML(Y)$ to denote that a maximum likelihood estimate depends on the data $Y$.
We may assume that $\lambda_\ML(Y)$ (or, if the modified log-likelihood function has multiple global minimum points, the largest of these) is a continuous function of the data, for otherwise predictions would not be continuous in the data, let alone Lipschitz.
Let $\varepsilon > 0$ and let $(Y_k)_{k=1}^\infty$ and $(Y_k')_{k=1}^\infty$ be two data sequences which satisfy $\norm[0]{Y_k - Y_k'} \leq \varepsilon$ for every $k \in \N$ and which converge to an $m$-constant data set:
\begin{equation*}
  \lim_{k \to \infty} Y_k - m(X) = \lim_{k \to \infty} Y_k' - m(X) = (c, \ldots, c) \in \R^n
\end{equation*}
for some $c \in \R$.
By \Cref{thm:main-theorem,thm:flat-limit} and the assumed continuity of $\lambda_\ML(Y)$ in the data, these sequences can be selected such that
\begin{equation*}
  \Sigma_k \coloneqq P_{\lambda_\ML(Y_k)}(x_0)^2 = C_1 \mathrm{e}^{-k} \quad \text{ and } \quad \Sigma_k' \coloneqq P_{\lambda_\ML(Y_k')}(x_0)^2 = C_2 k^{-1}
\end{equation*}
for some positive constants $C_1$ and $C_2$.
Since $\e^{-x} \leq 1$ for all $x \geq 0$, we get from~\eqref{eq:hellinger-gaussians} and~\eqref{eq:gp-predictive} that
\begin{equation*}
  \begin{split}
    d_\Hel ( Q_\GP(Y_k), Q_\GP(Y_k') )^2 \geq 1 - \frac{\sqrt{2} (\Sigma_k \Sigma_k')^{1/4}}{\sqrt{\smash[b]{\Sigma_k + \Sigma_k'}}} &= 1 - \frac{\sqrt{2} (C_1 C_2)^{1/4} k^{-1/4} \mathrm{e}^{-k/4}}{\sqrt{\smash[b]{C_1 \mathrm{e}^{-k} + C_2 k^{-1}}}} \\
    &\geq 1 - \sqrt{2} \, C_1^{1/4} C_2^{-1/4} k^{1/4} \mathrm{e}^{-k/4},
    \end{split}
\end{equation*}
where the second term tends to zero as $k \to \infty$.
Therefore 
\begin{equation*}
  d_\Hel ( Q_\GP(Y_k), Q_\GP(Y_k') ) \to 1 \quad \text{ as } \quad k \to \infty
\end{equation*}
even though $\norm[0]{Y_k - Y_k'} \to 0$ as $k \to \infty$.
This shows that the Lipschitz condition~\eqref{eq:hellinger-lipschitz} fails to hold when the data domain is 
\begin{equation*}
  \mathcal{R}^n = \Set{ Y \in \R^n }{ Y \text{ is not $m$-constant }} \subset \R^n,
\end{equation*}
the set of data sets that are not $m$-constant.
That is, we have shown that the mapping $Q_\GP \colon \mathcal{R}^n \to \mathcal{P}$ defined in~\eqref{eq:gp-predictive} is not Lipschitz, where $\mathcal{P}$ is the space of probability distributions on $\R$ equipped with the Hellinger distance.

The above derivation is a consequence of the fact that, from \Cref{thm:main-theorem} and the interpretation in~\eqref{eq:flat-limit-interpretation},
\begin{equation*}
  P_{\lambda_\ML(Y)}(x_0)^2 = 0 
\end{equation*}
if the data $Y$ are $m$-constant. Then for any data $Y'$ which are not $m$-constant we compute from~\eqref{eq:hellinger-gaussians} that
\begin{equation*}
  d_\Hel ( Q_\GP(Y), Q_\GP(Y') ) = 1,
\end{equation*}
which means that the predictive distribution is not continuous at any data which are $m$-constant.
Note that this is a purely formal computation because~\eqref{eq:Hellinger-definition} and~\eqref{eq:hellinger-gaussians} are valid only for measures which are absolutely continuous with respect a common reference measure, which is not the case with the degenerate Gaussian predictive distribution $Y$ that arises from $m$-constant data and the non-degenerate Gaussian $Y'$.
By observing that the argument above uses \Cref{thm:main-theorem} only to guarantee the existence of data $Y$ for which $\lambda_\ML(Y) = \infty$, we may formulate the following generic ill-posedness theorem.

\begin{theorem}[Ill-posedness] \label{thm:ill-posedness}
  Suppose that $K$ satisfies \Cref{assumption:sobolev-kernel} and $n \geq 1$.
  Let $\bar{\lambda} \colon \R^n \to [0, \infty]$ be any estimator of $\lambda$ and define $\mathcal{R}^n = \Set{ Y \in \R^n }{ \bar{\lambda}(Y) < \infty} \subset \R^n$.
  If there are data $Y \in \R^n$ such that $\bar{\lambda}(Y) = \infty$ (i.e., $\mathcal{R}^n \neq \R^n$), then Gaussian process interpolation is ill-posed, in the sense that the predictive distribution mapping $Q_\GP \colon \mathcal{R}^n \to \mathcal{P}$ defined in~\eqref{eq:gp-predictive} is not Lipschitz for any $x_0 \notin X$.
\end{theorem}

The main message of \Cref{thm:ill-posedness} is that a lengthscale estimator, whatever it might be, must be finite for any data in order for Gaussian process interpolation to be well-posed.

\section{What Does Not Help} \label{sec:what-does-not-help}

The Gaussian process model in \Cref{thm:main-theorem} is fairly simple, having a fixed prior mean function and a single estimated hyperparameter.
One might hope that additional modelling choices---or the use of an altogether different parameter estimation method---would yield a well-posed Gaussian process model.
In this section we show that this is not to be for several common approaches.
Each theorem in this section shows that an estimator of $\lambda$ is infinite if the data are $m$-constant, so that \Cref{thm:ill-posedness} consequently establishes that Gaussian process interpolation is ill-posed.

\subsection{Cross-Validation} \label{sec:cross-validation}

\emph{Leave-one-out cross-validation} is a popular alternative to maximum likelihood estimation that has been shown to confer robustness when the Gaussian process model is misspecified~\citep{Bachoc2013}.
In Gaussian process interpolation the objective function that is typically used is
\begin{equation} \label{eq:cv1-ell}
  \ell_{\CV}( \theta \mid Y ) = \sum_{k = 1}^n \bigg[ \bigg( \frac{ y_k - \mu_{\theta,n,k}(x_k) }{P_{\theta,n,k}(x_k)} \bigg)^2 + \log [P_{\theta,n,k}(x_k)^2] \bigg],
\end{equation}
where $\mu_{\theta,n, k} = m + s_{\theta, n, k}$ and $P_{\theta,n,k}$ denote the Gaussian process conditional mean and standard deviation functions in~\eqref{eq:conditional-mean}--\eqref{eq:conditional-mean-s} based on data at the points $X \setminus \{x_k\}$; see, for example, Section~4.2 in \citet{Currin1988} or Section~5.4.1 in \citet{RasmussenWilliams2006}.
Subscripts are again used to make explicit the dependency of these functions on the kernel parameters $\theta$.
Any corresponding parameter estimate $\theta_\CV$ satisfies
\begin{equation*}
  \theta_{\CV} \in \argmin_{ \theta \in \Theta } \ell_{\CV}( \theta \mid Y ).
\end{equation*}
The cross-validation objective function~\eqref{eq:cv1-ell} is obtained by summing negative predictive log-probabilities of $y_k$ given data at $X \setminus \{x_k\}$ and discarding terms which do not depend on~$\theta$.
Unfortunately, leave-one-out cross-validation also fails to be well-posed.
This may not be surprising given that there is a close connection between maximum likelihood estimation and cross-validation~\citep{FongHolmes2020}.

\begin{restatable}[Cross-validation]{theorem}{crossvalidation} \label{thm:cross-validation}
  Suppose that $K$ satisfies \Cref{assumption:sobolev-kernel} and $n \geq 2$.
  If the data~$Y$ are $m$-constant, then
  \begin{equation} \label{eq:cv-infinite}
    \lim_{ \lambda \to \infty} \ell_{\CV}(\lambda \mid Y) = -\infty \quad \text{ and } \quad \lambda_{\CV} = \infty.
  \end{equation}
\end{restatable}
\begin{proof}
  See \Cref{sec:proofs-what-does-not-help}. Despite the ostensibly different forms of the objective functions~\eqref{eq:ell-function} and~\eqref{eq:cv1-ell}, the proof is, in consequence of \Cref{prop:ell-representation}, in essence all but identical to the proof of \Cref{eq:mle-infinite} in \Cref{thm:main-theorem}.
\end{proof}

We believe that $\ell_\CV(\lambda \mid Y)$ and $\lambda_\CV$ satisfy a version of \Cref{eq:mle-finite} if the data are not $m$-constant but have been unable to furnish a proof; see \Cref{remark:cross-validation-proof-issue}.

\begin{remark} A non-probabilistic alternative to~\eqref{eq:cv1-ell} is to simply minimise the sum of squared leave-one-out errors~\citep[e.g.,][]{Rippa1999}:
\begin{equation*}
  \ell_{\CV(2)}( \theta \mid Y ) = \sum_{k=1}^n ( y_k - \mu_{\theta,n,k}(x_k) )^2 \geq 0 \quad \text{ and } \quad \theta_{\CV(2)} \in \argmin_{ \theta \in \Theta } \ell_{\CV(2)}( \theta \mid Y ).
\end{equation*}
Consider estimating the lengthscale parameter $\lambda$ using this procedure.
If the data $Y$ are $m$-constant such that $Y_m = Y - m(X) = (c, \ldots, c)$, it follows from \Cref{thm:flat-limit} that $\mu_{\lambda,n,k}$ tends pointwise to $m + c$ as $\lambda \to \infty$ if $K$ satisfies \Cref{assumption:sobolev-kernel}. Therefore
\begin{equation*}
  \lim_{ \lambda \to \infty } \ell_{\CV(2)}( \lambda \mid Y ) = \lim_{ \lambda \to \infty } \sum_{k=1}^n ( m(x_k) + c - \mu_{\lambda,n,k}(x_k) )^2 = 0,
\end{equation*}
from which it follows that $\lambda_{\CV(2)} = \infty$, or at least that $\ell_{\CV(2)}( \lambda \mid Y )$ has one of its minima at infinity.
Therefore also this procedure is ill-posed.
\end{remark}

\subsection{Unknown Parametric Prior Mean}

So far we have considered a setting where the prior mean function $m$ is known and fixed.
But in methods such as universal kriging the mean is assumed to be an unknown element of the linear span of a finite number of basis functions, typically polynomials, and its coefficients are estimated from the data.
See, for example, \citet{OHagan1978} or Chapters~3 and~4 in \citet{Santner2003}.

Let the basis functions be $\varphi_1, \ldots, \varphi_q$ for $q \leq n$ and define the matrix \sloppy{${V(X) \in \R^{n \times q}}$} with elements
\begin{equation*}
  (V(X))_{i,j} = \varphi_j(x_i).
\end{equation*}
Suppose that the mean function is $m = \sum_{j=1}^q \beta_j \varphi_j$ for unknown coefficients $\beta = (\beta_1, \ldots, \beta_q)$ which we wish to estimate using maximum likelihood.
The full modified log-likelihood function for both the kernel parameters $\theta$ and the coefficients $\beta$ is obtained by inserting the parametric prior mean in~\eqref{eq:ell-function}:
\begin{equation} \label{eq:ell-parametric-mean}
  \ell( \theta, \beta \mid Y ) = (Y - V(X) \beta)^\T K_\theta(X, X)^{-1} (Y - V(X) \beta) + \log \det K_\theta(X, X).
\end{equation}
Any maximum likelihood estimates satisfy
\begin{equation*}
  \{ \theta_\ML, \beta_\ML \} \in \argmin_{ \theta \in \Theta, \, \beta \in \R^q } \ell( \theta, \beta \mid Y).
\end{equation*}
The natural generalisation of \Cref{def:constant-data} to this setting is that there exist some coefficients for which the data are $m$-constant.
That is, that there exist constants $c$ and $\beta^* = (\beta_1^*, \ldots, \beta_q^*)$ (which need not be unique) such that
\begin{equation} \label{eq:constant-data-parametric}
  y_i - \sum_{j=1}^q \beta_j^* \varphi_j(x_i) = c \quad \text{ for every } \quad i = 1, \ldots, n.
\end{equation}
The next theorem shows that the maximum likelihood estimate of the lengthscale parameter~$\lambda$ is badly behaved if the data satisfy the above assumption.

\begin{restatable}{theorem}{parametrictheorem} \label{thm:parametric-prior-mean}
  Suppose that $K$ satisfies \Cref{assumption:sobolev-kernel} and $n \geq 2$.
  If the data $Y$ satisfy~\eqref{eq:constant-data-parametric}, then 
  \begin{equation*}
    \lambda_\ML = \infty \quad \text{ if } \quad \{ \lambda_\ML, \beta_\ML \} \in \argmin_{ \lambda > 0, \, \beta \in \R^q } \ell( \lambda, \beta \mid Y).
  \end{equation*}
\end{restatable}
\begin{proof}
  See \Cref{sec:proofs-what-does-not-help}. The proof is similar to that of \Cref{eq:mle-infinite} in \Cref{thm:main-theorem}.
\end{proof}

\begin{remark}
  If the matrix $V(X)$ has full rank, one can compute that
  \begin{equation*}
    \beta_\ML = \big[ V(X)^\T K_\theta(X, X)^{-1} V(X) \big]^{-1} V(X)^\T K_\theta(X, X)^{-1} Y
  \end{equation*}
  for any fixed $\theta$.
  If $q = n$, the matrix $V(X)$ is square and non-singular so that the above maximum likelihood estimate simplifies to $\beta_\ML = V(X)^{-1} Y$.
  Inserting this to~\eqref{eq:ell-parametric-mean} eliminates the data-fit term and we are left with $\ell(\theta, \beta_\ML \mid Y) = \log \det K_\theta(X, X)$.
  The maximum likelihood estimate of $\theta$ is therefore obtained by minimising complexity of the model.
  If $\theta = \lambda$, this naturally leads to $\lambda_\ML = \infty$ since this is the only value of the lengthscale parameter for which the covariance matrix becomes singular.
  The interpretation of this phenomenon is that maximum likelihood estimation picks the simplest possible model if the data are fully explained by the prior mean.
\end{remark}

\subsection{Simultaneous Estimation of the Scaling Parameter} \label{sec:sigma-lambda}

One typically estimates the lengthscale parameter simultaneously with the \emph{scale} or \emph{magnitude} parameter $\sigma > 0$.
Suppose that $\theta = \{\lambda, \sigma\}$ and the covariance kernel is parametrised as $K_\theta(x, y) = \sigma^2 K_\lambda(x, y)$.
Then
\begin{equation*}
  \ell( \lambda, \sigma \mid Y) = \frac{1}{\sigma^2} Y_m^\T K_\lambda(X, X)^{-1} Y_m + \log \det K_\lambda(X, X) + n \log \sigma^2
\end{equation*}
and maximum likelihood estimates satisfy
\begin{equation} \label{eq:mle-sigma-lambda}
  \{ \lambda_\ML, \sigma_\ML \} \in \argmin_{ \sigma, \lambda > 0} \ell(\lambda, \sigma \mid Y).
\end{equation}
Unfortunately, the behaviour of the maximum likelihood estimate $\lambda_\ML$ is identical to the case in \Cref{thm:main-theorem} where $\sigma$ is held fixed.
Let $\sigma_\ML(\lambda)$ denote the maximum likelihood estimate of $\sigma$ for a fixed $\lambda > 0$.

\begin{restatable}[Simultaneous estimation]{theorem}{simutheorem} \label{thm:simultaneous-estimation}
  Suppose that $K$ satisfies \Cref{assumption:sobolev-kernel} and $n \geq 2$.
  Consider the maximum likelihood estimates in~\eqref{eq:mle-sigma-lambda}.
  If the data $Y$ are $m$-constant and $Y_m \neq 0$, then
  \begin{equation*}
    \lim_{\lambda \to \infty} \ell(\lambda, \sigma_\ML(\lambda) \mid Y) = -\infty \quad \text{ and } \quad \lambda_\ML = \infty.
  \end{equation*}
\end{restatable}
\begin{proof}
  See \Cref{sec:proofs-what-does-not-help}. The proof does not fundamentally differ from that of \Cref{thm:main-theorem}.
\end{proof}

If it happens that $Y_m = 0$, the modified log-likelihood function is simply
\begin{equation*}
  \ell(\lambda, \sigma \mid Y) = \log \det K_\lambda(X, X) + n \log \sigma^2.
\end{equation*}
Then $\ell(\lambda, \sigma \mid Y) = -\infty$ if and only if $\lambda \to \infty$ or $\sigma \to 0$.
Either of these cases results zero conditional variance and degenerate predictive distributions.

\section{Regularisation} \label{sec:regularisation}

This section discusses two types of regularisation that can be used to ensure the well-posedness of Gaussian process regression or finiteness of a lengthscale estimator. However, these approaches may induce unwanted side effects and are, to some extent, arbitrary.

\subsection{Regularisation via Observation Noise} \label{sec:regularisation-nugget}

Let $\delta > 0$ be a \emph{regularisation parameter} (alternatively, \emph{smoothing parameter}, \emph{nugget} or \emph{jitter}).
Denote the $n \times n$ identity matrix with $I_n$.
The regularised versions of the Gaussian process conditional mean and covariance in~\eqref{eq:conditional-mean} and~\eqref{eq:conditional-variance} are
\begin{align*}
  \mu^\delta(x) &= m(x) + K(x, X)^\T (K(X, X) + \delta^2 I_n)^{-1} Y_m, \\
  P^\delta(x, y)^2 &= K(x, y) - K(x, X)^\T (K(X, X) + \delta^2 I_n)^{-1} K(X, y)
\end{align*}
and that of the modified log-likelihood function is
\begin{equation} \label{eq:ell-regularised}
  \ell^\delta( \lambda \mid Y ) = Y_m^\T (K_\lambda(X, X) + \delta^2 I_n)^{-1} Y_m + \log \det (K_\lambda(X, X) + \delta^2 I_n).
\end{equation}
In this section every quantity which is superscripted with $\delta$ stands for the corresponding quantity defined in \Cref{sec:main} but with $K(X, X)$ replaced by $K(X, X) + \delta^2 I_n$.
Although the above expressions arise from assuming that the data are corrupted by additive and independent zero-mean Gaussian noise terms with variances $\delta^2$, regularisation is often used purely out of convenience (that is, even when one does not believe that the data are noisy) as it improves the condition number of the covariance matrix that needs to be inverted~\citep[e.g.,][]{Ranjan2011, AdrianakisChallenor2012}.

Because $(K(X, X) + \delta^2 I_n)^{-1} < K(X, X)^{-1}$ in the Loewner ordering of positive-semidefinite matrices, we easily derive that the regularised conditional variance is everywhere positive:
\begin{equation*}
    P^\delta(x)^2 = P^\delta(x, x)^2 > P(x)^2 \geq 0 \quad \text{ for every } \quad x \in \R^d.
\end{equation*}
Consider then the lengthscale parameter $\lambda$ and the corresponding kernel $K_\lambda$ and assume for simplicity that $K_\lambda(x, y) \to 1$ as $\lambda \to \infty$ for all $x, y \in \Omega$.
Then it is straightforward to use the Sherman--Morrison formula to compute that
\begin{equation*}
    \lim_{\lambda \to \infty} P_\lambda^\delta(x)^2 = \lim_{\lambda \to \infty} \big[ K_\lambda(x, y) - K_\lambda(x, X)^\T (K_\lambda(X, X) + \delta^2 I_n)^{-1} K_\lambda(x, X) \big] = \frac{1}{1 + n \delta^{-2}}
\end{equation*}
and
\begin{equation*}
  \lim_{ \lambda \to \infty } \mu_\lambda^\delta(x) = m(x) + \frac{\delta^{-2} }{1 + n \delta^{-2}} \sum_{i=1}^n (y_i - m(x_i))
\end{equation*}
for any $x \in \R^d$ and $Y_m \in \R^n$.
These estimates and computations establish that under regularisation no finite-dimensional distribution of the conditional Gaussian process can tend to a degenerate Gaussian as $\lambda$ varies.
It follows that regularised Gaussian process interpolation is well-posed in the sense discussed in \Cref{sec:ill-posed} as long as the estimator of $\lambda$ is continuous in the data.
However, the price one pays for vanquishing ill-conditioning is that the conditional mean no longer interpolates the data, which may be undesirable in applications where the data are truly noiseless, and that an additional degree of freedom is introduced.
Favorable convergence rates of Gaussian process interpolation (asymptotically as $n \to \infty$) are also lost under regularisation unless one has the regularisation parameter tend to zero with an appropriate rate~\citep[Section~3]{WendlandRieger2005}.

\begin{figure}[t]
  \centering
  \includegraphics[width=\textwidth]{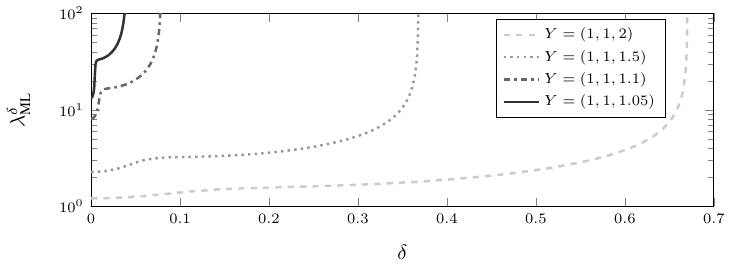}
  \caption{The regularised maximum likelihood estimate $\lambda_\ML^\delta$ in~\eqref{eq:lambda-ml-delta} as a function of the regularisation parameter $\delta$ for four different data sets $Y \in \R^3$.} \label{fig:regularisation}
\end{figure}

Even though maximum likelihood estimation of $\lambda$ cannot cause ill-posedness in the regularised setting (assuming the estimator is continuous in the data), there remains the interesting question of how $\lambda_\ML^\delta$, as computed by minimising the regularised modified log-likelihood function in~\eqref{eq:ell-regularised}, behaves if the data are $m$-constant.
Our attempts at proving any properties of $\lambda_\ML^\delta$ have been unsuccessful, and therefore here we limit ourselves to a simple numerical investigation, the results of which are depicted in \Cref{fig:regularisation}.
This figure plots
\begin{equation} \label{eq:lambda-ml-delta}
  \lambda_\ML^\delta = \argmin_{ \lambda > 0 } \ell^\delta( \lambda \mid Y )
\end{equation}
as a function of $\delta > 0$ for four different data vectors $Y \in \R^3$ (we set $m \equiv 0$) when $X = \{1, 1.2, 2.0\} \subset \R$ and $K$ is the Matérn kernel in~\eqref{eq:matern} with parameters $\sigma = 1$ and $\nu = 3/2$.
Minimisation was performed using grid search.
It appears that for each data set the maximum likelihood estimate $\lambda_\ML^\delta$ has a singularity at a certain value $\delta_\infty$ of $\delta$ and that $\delta_\infty$ is smaller when the data $Y$ are closer to being constant.
This suggests that, for a sufficiently large $\delta$, maximum likelihood estimation always reverts to the simplest possible model, that obtained with $\lambda = \infty$.
One way to interpret this observation is that any given data set could have been plausibly generated by a constant function if the data set is assumed to be corrupted by Gaussian noise with sufficiently large variance (i.e., if $\delta^2$ is sufficiently large).

\subsection{Regularisation via Lengthscale Hyperprior} \label{sec:regularisation-prior}

Suppose that a prior with a density function $p$ is placed on the parameters $\theta$.
Then the posterior for $\theta$ has the log-density
\begin{equation*}
  \log p(\theta \mid Y) = -\frac{1}{2} \ell( \theta \mid Y) + \log p(\theta) + \text{constant},
\end{equation*}
where $\ell(\theta \mid Y)$ is the modified log-likelihood function in~\eqref{eq:ell-function}.
Any maximiser $\theta_\MAP$ of the posterior density is called a \emph{maximum a posteriori} (MAP) estimate of $\theta$.
Equivalently,
\begin{equation} \label{eq:map-estimator}
  \theta_\MAP \in \argmin_{ \theta \in \Theta } \ell_\MAP( \theta \mid Y ) \quad \text{ for } \quad \ell_\MAP(\theta \mid Y) = \frac{1}{2} \ell( \theta \mid Y ) - \log p(\theta).
\end{equation}
The following theorem shows that assigning a non-heavy-tailed hyperprior on the lengthscale~$\lambda$ ensures that its MAP estimate is finite regardless of the data.

\begin{restatable}{theorem}{maptheorem} \label{thm:map-estimation}
  Suppose that $K$ satisfies \Cref{assumption:sobolev-kernel}, $n \geq 1$, and
  \begin{equation} \label{eq:hyperprior-tail-assumption1}
    \lambda^{(\alpha - d/2)n} p(\lambda) \to 0 \quad \text{ as } \quad \lambda \to \infty,
  \end{equation}
  where $\alpha > d/2$ is the constant in \Cref{assumption:sobolev-kernel}.
  If $Y \in \R^n$ are any data, then
  \begin{equation*}
    \lim_{\lambda \to \infty} \ell_\MAP( \lambda \mid Y ) = \infty \quad \text{ and } \quad \lambda_\MAP < \infty.
  \end{equation*}
\end{restatable}
\begin{proof}
  See \Cref{sec:proof-map-estimation}.
  The essence of the proof is that the assumption on tail decay of $p$ ensures that $-\log p(\lambda)$ dominates $\ell_\MAP( \lambda \mid Y)$ as $\lambda \to \infty$.
\end{proof}

Because, for fixed $Y$, the choice of the hyperprior $p$ completely determines the estimator, MAP estimation is rather arbitrary and not well-suited to deployment in general-purpose software.

\section{Generalisations and Extensions} \label{sec:generalisation}

This section discusses some generalisations of \Cref{thm:main-theorem} for (a) general linear data, (b) lengthscale estimation for product kernels, and (c) kernels which do not satisfy the Fourier decay assumption in~\eqref{eq:Phi-bounds}.

\subsection{Linear Information and General Kernels} \label{sec:linear-information}

In this section we generalise part of \Cref{thm:main-theorem} by replacing the domain $\R^d$ with an arbitrary vector space $\Omega$, using a more liberal definition of a lengthscale parameter, and considering general linear data, such as derivative evaluations.
Though somewhat technical, the assumptions that this generalisation requires can be verified in some settings of practical interest.

Let $\Omega$ be a vector space and $K_\theta \colon \Omega \times \Omega \to \R$ a positive-definite kernel on $\Omega$ for each $\theta \in \Theta$.
We use $H(K_\theta, \Omega)$ to denote the RKHS of $K_\theta$ on $\Omega$; see \Cref{sec:rkhs} for details.
Let $F(\Omega)$ be a set of real-valued functions defined on $\Omega$ which contains $H(K_\theta, \Omega)$ for every $\theta \in \Theta$ as well as all constant functions and the prior mean function $m$.
Let $\mathcal{L} = \{L_1, \ldots, L_n\}$ be a collection of $n \geq 2$ non-trivial (i.e., none of them is the zero functional) linear \emph{information functionals} defined on $F(\Omega)$. These functionals are assumed to be linearly independent and bounded on $H(K_\theta, \Omega)$ for every $\theta \in \Theta$, in that there is $C_\theta > 0$ such that $\abs[0]{L_i(f)} \leq C_\theta \norm[0]{f}_{H(K_\theta,\Omega)}$ for every $f \in H(K_\theta, \Omega)$.
We assume that the data
\begin{equation} \label{eq:general-data}
  Y_\mathcal{L} = ( L_1 f, \ldots, L_n f) \in \R^n
\end{equation}
consist of applications of the information functionals to an underlying (and unknown) data-generating function $f \in F(\Omega)$.
The setting considered earlier in this article is recovered by selecting the point evaluation functionals defined as $L_i f = f(x_i)$.
Partial derivative data, defined by $L_i f = \partial_{p_i} f(x_{i})$ for some $x_i \in \Omega$ and $p_i \geq 0$, also occurs commonly in Gaussian process applications~\citep[e.g.,][]{Solak2002}.
If necessary to avoid ambiguity, we use superscripts to denote the argument with respect to which an information functional is to be applied.

Let $\mathcal{L}(m) = (L_i m)_{i=1}^n \in \R^n$ and set $Y_{\mathcal{L},m} = Y_\mathcal{L} - \mathcal{L}(m) \in \R^n$.
Also set
\begin{equation*}
  K_\theta(\mathcal{L}, \mathcal{L}) = (L_i^x L_j^y K_\theta(x, y))_{i,j=1}^n \in \R^{n \times n} \quad \text{ and } \quad K_\theta(\mathcal{L}, x) = (L_i K_\theta(\cdot ,x))_{i=1}^n \in \R^n,
\end{equation*}
which are well-defined by the assumption that the information functionals are bounded on $H(K_\theta, \Omega)$.
The vector $K_\theta(x, \mathcal{L})$ is defined analogously to $K_\theta(\mathcal{L}, x)$ but with the information functionals applied to the second argument.
The Gaussian process $f_\textup{GP} \sim \mathrm{GP}(m, K_\theta)$ conditioned on the general linear data $Y_\mathcal{L}$ in~\eqref{eq:general-data} has the mean
\begin{equation*}
   \mathbb{E} [ f_\textup{GP}(x) \mid Y_\mathcal{L} ] = Y_{\mathcal{L}, m}^\T K_\theta(\mathcal{L}, \mathcal{L})^{-1} K_\theta(\mathcal{L}, x),
\end{equation*}
and covariance
\begin{equation*}
  \mathrm{Cov}[ f_\textup{GP}(x), f_\textup{GP}(y) \mid Y_\mathcal{L} ] = K_\theta(x, y) - K_\theta(x, \mathcal{L})^\T K_\theta(\mathcal{L}, \mathcal{L})^{-1} K_\theta(\mathcal{L}, y).
\end{equation*}
The modified log-likelihood function is
\begin{equation} \label{eq:ell-general}
  \ell( \theta \mid Y_\mathcal{L} ) = Y_{\mathcal{L}, m}^\T K_\theta( \mathcal{L}, \mathcal{L} )^{-1} Y_{\mathcal{L}, m} + \log \det K_\theta( \mathcal{L}, \mathcal{L} ).
\end{equation}

Let $g \colon (0, \infty) \to (0, \infty)$ be a continuous function such that (a) $\limsup_{\lambda \to 0} g(\lambda) < \infty$, (b) $\lim_{\lambda \to \infty} g(\lambda) = \infty$, and (c) $g(\lambda_1) = 1$ for some $\lambda_1 > 0$.
Define
\begin{equation} \label{eq:kernel-general}
  K_\lambda(x, y) = K\bigg( \frac{x}{g(\lambda)}, \frac{y}{g(\lambda)} \bigg) \quad \text{ for any } \quad \lambda > 0 \: \text{ and } \: x, y \in \Omega.
\end{equation}
The parameter $\lambda$ can be interpreted as a generalised version of the lengthscale parameter of a stationary kernel in~\eqref{eq:stationary-kernel-intro}.
Stationary kernels are recovered from~\eqref{eq:kernel-general} by setting $g(\lambda) = \lambda$ and $K(x,y) = \Phi(x -y)$.
The exponential kernel $K_\lambda(x, y) = \exp( xy /\lambda )$ is a simple non-stationary example that is occasionally used.
We are interested in maximum likelihood estimates $\lambda_{\ML(\mathcal{L})}$ of $\lambda$, any of which satisfies
\begin{equation*}
  \lambda_{\ML(\mathcal{L})} \in \argmin_{ \lambda > 0} \ell( \lambda \mid Y_\mathcal{L} ) = \argmin_{ \lambda > 0} \big\{ Y_{\mathcal{L}, m}^\T K_\lambda( \mathcal{L}, \mathcal{L} )^{-1} Y_{\mathcal{L}, m} + \log \det K_\lambda( \mathcal{L}, \mathcal{L} ) \big\}.
\end{equation*}
In this setting we say that the data $Y_\mathcal{L}$ are $m$-constant if there is a constant function $c \colon \Omega \to \R$ such that
\begin{equation} \label{eq:m-constant-general}
  Y_{\mathcal{L}, m} = ( L_1 c, \ldots, L_n c).
\end{equation}
To prove a generalisation of \Cref{thm:main-theorem} we need the following technical assumption.

\begin{assumption} \label{ass:general}
  Every element of the matrix $K_\lambda( \mathcal{L}, \mathcal{L} )$ is a continuous function of $\lambda > 0$ and
  \begin{equation*}
    \text{(a)} \quad \liminf_{ \lambda \to 0 } e_\textup{min} ( K_\lambda(\mathcal{L}, \mathcal{L} ) ) > 0 \quad \text{ and } \quad \text{(b)} \quad \lim_{ \lambda \to \infty } \det K_\lambda(\mathcal{L}, \mathcal{L} ) = 0,
  \end{equation*}
  where $e_\textup{min}(A)$ denotes the smallest eigenvalue of a matrix $A$.
\end{assumption}

\begin{restatable}{theorem}{gentheorem} \label{thm:generalisation}
  Suppose that Assumption~\ref{ass:general} holds and let $\Omega_b$ be any convex subset of $\Omega$ such that (a) $0 \in \Omega_b$ and (b) $Lf_1 = Lf_2$ for every $L \in \mathcal{L}$ and all $f_1, f_2 \in F(\Omega)$ such that $f_1 = f_2$ on $\Omega_b$.
  If the data $Y_\mathcal{L}$ are $m$-constant and constant functions are contained in $H(K, \Omega_b)$, then
  \begin{equation*}
    \lim_{ \lambda \to \infty }\ell( \lambda \mid Y_\mathcal{L} ) = -\infty \quad \text{ and } \quad \lambda_{\ML(\mathcal{L})} = \infty.
  \end{equation*}
\end{restatable}
\begin{proof}
  See \Cref{sec:proof-of-generalisation}. The proof is in essence identical to that of~\eqref{eq:mle-infinite}, but more technical.
\end{proof}

\Cref{ass:general} and other assumptions in \Cref{thm:generalisation} may be verified on case-by-case basis.
For example, let $\Omega = \R$ and
\begin{equation*}
  K_\lambda(x, y) = \bigg( 1 + \frac{\sqrt{3}\abs[0]{x-y}}{\lambda} \bigg) \exp\bigg(\! - \frac{\sqrt{3}\abs[0]{x-y}}{\lambda} \bigg),
\end{equation*}
which is the Matérn kernel in~\eqref{eq:matern} with smoothness $\nu = 3/2$.
Suppose that $L_1 f = f'(x_1)$ and $L_if = f(x_i)$ for $i = 2,\ldots,n$ and distinct $x_2, \ldots, x_n$.
One may easily find a convex $\Omega_b \subset \R$ such that $0,x_1,x_2,\ldots,x_n \in \Omega_b$ (e.g., a ball centered at the origin with radius that exceeds the maximal norm of the points).
Because this set can be taken to be bounded, it follows from~\eqref{eq:matern-fourier-transform} and the results reviewed in \Cref{sec:sobolev-spaces} that constant functions are contained in $H(K, \Omega_b)$.
To verify Assumption~\ref{ass:general}, observe that the generalised covariance matrix is
\begin{equation} \label{eq:kernel-matrix-derivatives}
  K_\lambda(\mathcal{L}, \mathcal{L}) = \begin{pmatrix} a_\lambda & b_\lambda^\T \\ b_\lambda & K_\lambda( X', X') \end{pmatrix},
\end{equation}
where $K_\lambda(X', X') \in \R^{(n-1) \times (n-1)}$ is the regular covariance matrix for the points $x_2, \ldots, x_n$,
\begin{equation*}
  a_\lambda = \frac{\partial^2}{\partial x \partial y} K_\lambda(x, y) \biggl|_{\substack{x = x_1 \\ y = x_1}} = \frac{3}{\lambda^2},
\end{equation*}
and
\begin{equation*}
  (b_\lambda)_{i-1} = \frac{\partial}{\partial x} K_\lambda(x, x_i) \biggl|_{x = x_1} = -\frac{3}{\lambda^2} (x_1 - x_i) \exp\bigg( \! -\frac{\sqrt{3} \abs[0]{x_1 - x_i}}{\lambda} \bigg)
\end{equation*}
for $i = 2,\ldots,n$.
The entries of $b_\lambda$, as well as $a_\lambda$, are continuous in $\lambda$ on $(0, \infty)$.
We compute $\lim_{ \lambda \to 0} a_\lambda = \infty$ and $\lim_{ \lambda \to \infty} a_\lambda = 0$, as well as $\lim_{ \lambda \to 0} (b_\lambda)_{i-1} = 0$ and $\lim_{ \lambda \to \infty} (b_\lambda)_{i-1} = 0$ for each $i=2,\ldots,n$.
Therefore $\lim_{ \lambda \to 0} K_\lambda(\mathcal{L}, \mathcal{L} ) = \mathrm{diag}(\infty, 1, \ldots, 1)$ and $\lim_{ \lambda \to \infty} K_\lambda(\mathcal{L}, \mathcal{L} )$ is singular because its first row is zero.
Assumption~\ref{ass:general} thus holds and it follows from \Cref{thm:generalisation} that $\lambda_{\textup{ML}(\mathcal{L})} = \infty$ if $Y_{\mathcal{L},m} = (0, c, \ldots, c)$ for some $c \in \R$.

\subsection{Product Kernels and Multiple Lengthscales} \label{sec:multiple-lengthscales}

Here we consider a setting where the covariance kernel is a product of stationary kernels equipped with dimensionwise lengthscale parameters.
That is, $\theta = \{\lambda_1, \ldots, \lambda_d\}$ and the kernel has the product form
\begin{equation} \label{eq:product-kernel}
  K_\theta(x, y) = \prod_{i=1}^d K_{i,\lambda_i}(x_i, y_i) = \prod_{i=1}^d \Phi_i \bigg( \frac{x_i - y_i}{\lambda_i} \bigg),
\end{equation}
where $x = (x_1, \ldots, x_d) \in \R^d$, $y = (y_1, \ldots, y_d) \in \R^d$ and $K_i(x_i, y_i) = \Phi_i(x_i - y_i)$ are stationary kernels on $\R$ parametrised by positive lengthscale parameters $\lambda_i$.
Recall the definition of Mat\'ern kernel from~\eqref{eq:matern}.
The product Mat\'ern kernel
\begin{equation*}
  K_\theta(x, y) = \sigma^2 \prod_{i=1}^d \Bigg[ \frac{2^{1-\nu_i}}{\Gamma(\nu_i)} \bigg( \frac{\sqrt{2\nu_i} \abs[0]{x_i - y_i}}{\lambda_i} \bigg)^{\nu_i} \mathcal{K}_{\nu_i} \bigg( \frac{\sqrt{2\nu_i} \abs[0]{x_i - y_i}}{\lambda_i} \bigg) \Bigg],
\end{equation*}
where $\nu_i > 0$, is a commonly used product kernel of the form~\eqref{eq:product-kernel}.

For simplicity, let us consider maximum likelihood estimation of only one of the $d$ lengthscale parameters.
For $p \in \{1, \ldots, d\}$, we are interested in the behaviour of
\begin{equation} \label{eq:mle-lengthscale-p}
  \lambda_{p,\ML} = \argmin_{ \lambda_p > 0} \ell( \lambda_1, \ldots, \lambda_d \mid Y),
\end{equation}
where $\ell( \lambda_1, \ldots, \lambda_d \mid Y)$ is the modified log-likelihood function in~\eqref{eq:ell-function} for $\theta = \{\lambda_1, \ldots, \lambda_d\}$ and the product kernel in~\eqref{eq:product-kernel} and $\lambda_i$ for $i \neq p$ are fixed.
For product covariates $X$ of the form
\begin{equation} \label{eq:product-covariates}
  X = X_1 \times \cdots \times X_d, \quad \text{ where } \quad X_i = \{x_{i,1}, \ldots, x_{i,n_i}\} \subset \R,
\end{equation}
we say that the associated data $Y$ are $m$-constant \emph{along dimension} $p$ if
\begin{equation*}
  y_{i_1, \ldots, i_d} - m(x_{1,i_1}, \ldots, x_{d, i_d}) \quad \text{ does not depend on } \quad i_p = 1, \ldots, n_p,
\end{equation*}
where the datum $y_{i_1, \ldots, i_d}$ is associated with the covariate $(x_{1,i_1}, \ldots, x_{d, i_d}) \in X$.
For example, the data
\begin{equation*}
  Y = (y_{1,1}, y_{1,2}, y_{1,3}, y_{2,1}, y_{2,2}, y_{2,3}) = (0, 1, 2, 0, 1, 2)
\end{equation*}
are constant along dimension $p = 1$ for the product design
\begin{equation*}
  \begin{split}
  X = \{x_{1,1}, x_{1,2}\} \times \{x_{2,1}, x_{2,2}, x_{2,3}\} = \big\{ &(x_{1,1}, x_{2,1}), (x_{1,1}, x_{2,2}), (x_{1,1}, x_{2,3}), \\
  & (x_{1,2}, x_{2,1}), (x_{1,2}, x_{2,2}), (x_{1,2}, x_{2,3}) \big\}
  \end{split}
\end{equation*}
in $\R^2$ and the prior mean $m \equiv 0$.

\begin{restatable}[Estimation of multiple lengthscales]{theorem}{multiplelengthscales} \label{thm:multiple-lengthscales}
  Consider the product kernel in~\eqref{eq:product-kernel} and suppose that the stationary kernels $K_1, \ldots, K_d$ on $\R$ satisfy \Cref{assumption:sobolev-kernel}.
  If $X$ has the product form~\eqref{eq:product-covariates} with $n_i \geq 1$ for each $i = 1, \ldots, d$ and the data $Y$ are $m$-constant along dimension $p \in \{1, \ldots, d\}$, then
  \begin{equation*}
    \lim_{ \lambda_p \to \infty} \ell( \lambda_1, \ldots, \lambda_d \mid Y) = -\infty \quad \text{ and } \quad \lambda_{p,\ML} = \infty,
  \end{equation*}
where $\lambda_{p,\ML}$ is the maximum likelihood estimate of the $p$th lengthscale parameter in~\eqref{eq:mle-lengthscale-p}.
\end{restatable}
\begin{proof}
See \Cref{sec:multiple-lambda-proof}. The product form of the kernel and the covariates allow one to write the full covariance matrix $K_\theta(X, X)$ as a Kronecker product of $K_{i,\lambda}(X_i, X_i)$. One may then utilise the properties of Kronecker products and subsequently follow the proof of \Cref{thm:main-theorem}.
\end{proof}

Note that the constants $C_1$, $C_2$ and $\alpha$ in \Cref{assumption:sobolev-kernel} may differ from one constituent kernel $K_i$ to another.
\Cref{thm:multiple-lengthscales} provides some theoretical justification for the use of maximum likelihood estimation of lengthscales as an \emph{automatic relevance determination} method~\citep[Section~5.1]{RasmussenWilliams2006}. 
When the data are independent of the $p$th input dimension, the lengthscale for this dimension is set to infinite and the dimension is effectively ignored. 

\subsection{Infinitely Smooth Stationary Kernels}

Commonly used infinitely smooth stationary kernels, such as the Gaussian and the inverse quadratic (or Cauchy) defined by 
\begin{equation} \label{eq:smooth-kernels}
  \Phi(z) = \exp( - \norm[0]{z}^2 ) \quad \text{ and } \quad \Phi(z) = \frac{1}{1 + \norm[0]{z}^2},
\end{equation}
respectively, do not satisfy \Cref{assumption:sobolev-kernel} because their Fourier transforms decay (at least) exponentially.
The exponential decay of their Fourier transforms implies that these kernels are analytic.
The purpose of \Cref{assumption:sobolev-kernel} is to guarantee that constant functions are contained in the RKHS of $K$ on a bounded set, a result which in turn can be exploited to prove that the data-fit term $Y_m^\T K_\lambda(X, X)^{-1} Y_m$ is a bounded function of $\lambda$ whenever the data are $m$-constant (see \Cref{lemma:data-fit-bounded}).
However, it is known that the RKHSs of analytic stationary kernels, such as those in~\eqref{eq:smooth-kernels}, do not contain constant functions~\citep{Steinwart2006, SunZhou2008, Minh2010, DetteZhigljavsky2021}.
But this does have to mean that the data-fit term explodes as $\lambda \to \infty$.

Increasing the lengthscale parameter is equivalent to \emph{coalescence} of the points to the origin. 
That is, using the kernel $K_\lambda$ and points $X$ is equivalent to using the kernel $K$ and the scaled points $X_\lambda = \{x_1 / \lambda, \ldots, x_n / \lambda\}$, each of which tends to the origin as $\lambda \to \infty$.
Suppose for simplicity that $d=1$.
When the points coalesce, one's data effectively comprises the value at the origin of the data-generating function and its successive derivatives up to order $n-1$.
We refer to Section~11 in \citet{Schaback2008} and Chapter~5 in \citet{Oettershagen2017} for more discussion and some results regarding this phenomenon. The computations in Section~2 of \citet{DetteZhigljavsky2021} are also relevant.
If the data are $m$-constant such that $Y_m = (c, \ldots, c)$ and the kernel $K(x, y) = \Phi(x -y)$ is sufficiently smooth, this reasoning suggests the conjecture that
\begin{equation} \label{eq:smooth-conjecture}
  \lim_{ \lambda \to \infty } Y_m^\T K_\lambda(X, X)^{-1} Y_m = \lim_{ \lambda \to \infty } Y_m^\T K(X_\lambda, X_\lambda)^{-1} Y_m = D_0^\T W^{-1} D_0
\end{equation}
where $D_0 = (c, 0, \ldots, 0) \in \R^n$ and the \emph{Wronskian} $W \in \R^{n \times n}$ has the elements
\begin{equation} \label{eq:wronskian}
  (W)_{i+1,j+1} = \frac{\partial^{i+j}}{\partial v^i \partial w^j} K(v, w) \Bigl|_{\substack{v = 0 \\ w = 0}}
\end{equation}
for $i, j = 0, \ldots, n-1$.
The conjectured limit $D_0^\T W^{-1} D_0$ is the data-fit term in~\eqref{eq:ell-general} for the information functionals defined as $L_i f = f^{(i-1)}(0)$ for $i=1,\ldots,n$ and general $m$-constant data in~\eqref{eq:m-constant-general}.
A proof of~\eqref{eq:smooth-conjecture} is the main ingredient in the proof of the following theorem, which partially generalises \Cref{thm:main-theorem} for infinitely differentiable kernels when $d = 1$.

\begin{restatable}{theorem}{smooththeorem} \label{thm:smooth-kernels}
  Let $d = 1$ and $K(x, y) = \Phi(x - y)$.
  Suppose that (i) the function $\Phi \colon \R \to \R$ is integrable and infinitely differentiable in a neighbourhood of the origin; (ii) $\Phi^{(k)}(0) = 0$ for every odd $k$; and (iii) the Fourier transform of $\Phi$ is everywhere positive.
  If $n \geq 1$ and the data $Y$ are $m$-constant, then
  \begin{equation*}
    \lim_{ \lambda \to \infty} \ell(\lambda \mid Y) = -\infty \quad \text{ and } \quad \lambda_\ML = \infty.
  \end{equation*}
\end{restatable}
\begin{proof}
See \Cref{sec:smooth-proof}. The proof uses Equation~(32) in \citet{BarthelmeUsevich2021}.
\end{proof}

\section{Conclusion and Implications} \label{sec:conclusion}

In this article we have proved that Gaussian process regression with noiseless data and a stationary covariance kernel is ill-posed if the lengthscale parameter of the kernel is estimated using maximum likelihood: When the data differ from the prior mean by a constant mean shift, the maximum likelihood estimate of the lengthscale parameter is infinite (\Cref{thm:main-theorem}) and the conditional Gaussian process is degenerate (\Cref{thm:flat-limit}).
As shown in \Cref{sec:what-does-not-help}, these conclusions remain valid under more general parametrisations and also applies to leave-one-out cross-validation.

\subsection{Practical Implications}

In a way, our results imply a practical simplification.
If the data are $m$-constant (which, when the prior mean $m$ is known and fixed, is trivial to check), there is no need for numerical optimisation of the log-likelihood function as one can use \Cref{thm:main-theorem} to set $\lambda_\ML = \infty$ and \Cref{thm:flat-limit} to compute the conditional mean and covariance.
However, degeneracy of the resulting conditional process implies that there is no predictive uncertainty and the conditional process is therefore useless as a tool for uncertainty quantification.
Except for switching to a non-stationary kernel, we do not know of a good approach to fix this, and it may be that some non-stationary kernels are equally problematic and induce similar behaviour when some of their parameters are estimated.

A numerical issue that is encountered when the data are close to being $m$-constant is that of ill-conditioning of the covariance matrix.
As $\lambda \to \infty$, the condition number of the covariance matrix $K_\lambda(X, X)$ increases with a rate related to the smoothness of the kernel.
This means that one cannot compute the modified log-likelihood function for large values of $\lambda$.
In practice one therefore has to either introduce a regularisation parameter to upper bound the condition number as a function of $\lambda$ or select a finite maximal lengthscale $\lambda_\textup{max} > 0$ for which $\ell( \lambda \mid Y)$ can be reliably computed and find the maximum likelihood estimate in $(0, \lambda_\textup{max}]$.
  When the data are $m$-constant, restricting the feasible set for $\lambda$ to $(0, \lambda_\textup{max}]$ is likely to result in $\lambda_\ML = \lambda_\textup{max}$, so that the user effectively selects an arbitrary (though probably fairly large) lengthscale in this case.
    The dependence of predictions in $\lambda_\textup{max}$ may or may not be problematic depending on the context.

    A practical recommendation borne out by our results is that all Gaussian process implementations which use maximum likelihood but not regularisation should check if the data are $m$-constant.
    If the check indicates that the data are $m$-constant, an implementation should either (a) forgo lengthscale estimation and output a degenerate conditional process or (b) throw an error and inform the user of the problem.
    If the approach~(a) is chosen it should be made clear to the user that the output is degenerate as this may have important ramifications in the applied context.
    A more general research programme suggests itself:
      \begin{enumerate}
      \item[(i)] To characterise, for each estimator of the kernel parameters $\theta$, the problematic data sets $Y$ which cause Gaussian process regression or interpolation to be ill-posed.
      \item[(ii)] To hard-code Gaussian process software to throw an error (or at least a warning) when such data are encountered.
      \end{enumerate}

\subsection{Theoretical Implications}

When analysing the convergence of Gaussian process regression as $n \to \infty$, it is typically assumed that the covariance kernel is fixed.
To the best of our knowledge, in the deterministic interpolation regime only \citet{Teckentrup2020} and \citet{Wynne2021} allow the kernel parameters other than scaling parameter $\sigma$ from \Cref{sec:sigma-lambda} (which does not affect the conditional mean) to vary.
Their results are generic in that no specific parameter estimation method is considered and the parameter estimates are simply assumed to remain within certain sets.
In \citet{Wynne2021} only a smoothness parameter, such as the parameter $\nu$ of Matérn kernels~\eqref{eq:matern}, is allowed to vary.
\citet[Theorem~3.5]{Teckentrup2020} considers kernels which satisfy~\Cref{assumption:sobolev-kernel} and proves that the conditional mean in~\eqref{eq:conditional-mean} tends to the true data-generating function $f$ such that $Y = f(X)$ if (i) this function has certain smoothness and (ii) there is a compact set which contains the estimate of $\lambda$ for every $n$.
As we have seen in this article, the second assumption fails if $f$ happens to be a mean shift of the prior mean (i.e., $f = m + c$ for some $c \in \R$) and $\lambda$ is estimated using maximum likelihood.
This demonstrates that unless one imposes an artificial upper bound on the parameter estimates, smoothness assumptions alone are not sufficient for comprehensive convergence analysis of Gaussian process regression.

\subsection{On Estimation of Other Parameters} \label{sec:other-parameters}

We conclude by pointing out that our ill-posedness results are specific to lengthscale estimation and should not be expected to extend to estimation of other kernel parameters.
Two examples serve to illustrate this.
First, consider the scale parametrisation $K_\sigma(x, y) = \sigma^2 K(x, y)$ for a scale parameter $\sigma > 0$.
From~\eqref{eq:ell-function} it is straightforward to compute that the maximum likelihood estimate of $\sigma$ is available in closed form:
\begin{equation*}
  \sigma_\ML = \sqrt{ \frac{Y_m^\T K(X, X)^{-1} Y_m}{n} }.
\end{equation*}
Here only the data $Y = m(X)$ yield a problematic parameter estimate $\sigma_\ML = 0$ that results in degenerate predictive distributions.
Consider then estimation of the smoothness parameter $\nu > 0$ of a Matérn kernel $K_\nu$ in~\eqref{eq:matern}. 
The presence of the coefficient $2^{1-\nu}/\Gamma(\nu)$ ensures that $K_\nu(x, x) = \sigma^2$ for every $\nu > 0$, which has two implications:
\begin{itemize}
\item As is well known, $K_\nu(x, y)$ tends to the Gaussian kernel $\sigma^2 \exp(-\norm[0]{x-y}^2\!/(2\lambda^2))$ as $\nu \to \infty$ for all $x, y \in \R^d$.
\item As $\nu \to 0$, $K_\nu(x, y) \to \sigma^2$ if $x = y$ and $K_\nu(x, y) \to 0$ if $x \neq y$. The latter of these claims follows from the facts that $\mathcal{K}_0(z)$, the Bessel function of the second kind of zeroth order, is well-defined if $z \neq 0$ and $\Gamma(\nu) \to \infty$ as $\nu \to 0$.
\end{itemize}
This shows that both potentially problematic limiting cases, $\nu \to 0$ and $\nu \to \infty$, yield valid positive-definite kernels.
Consequently, degenerate predictive distributions can never arise from estimation of the Matérn smoothness parameter.

\section{Proofs} \label{sec:proofs}

This section contains proofs for the results in \Cref{sec:main,sec:what-does-not-help,sec:regularisation,sec:generalisation}.

\subsection{Interpolation in Reproducing Kernel Hilbert Spaces} \label{sec:rkhs}

Let $\Omega$ be an arbitrary set and $K \colon \Omega \times \Omega \to \R$ a positive-definite kernel, which means that
\begin{equation} \label{eq:posdef-condition}
  \sum_{n=1}^n \sum_{j=1}^n a_i a_j K(x_i, x_j) > 0
\end{equation}
for any $n \in \N$, any non-zero vector $a = (a_1, \ldots, a_n)$, and any distinct points $x_i \in \Omega$.
The kernel is positive-semidefinite if the inequality in~\eqref{eq:posdef-condition} is not required to be strict.
Then $K$ induces a unique \emph{reproducing kernel Hilbert space} (RKHS), $H(K, \Omega)$.
This is a Hilbert space consisting of real-valued functions defined on $\Omega$ and is equipped with an inner product $\inprod{\cdot}{\cdot}_{H(K,\Omega)}$ and the corresponding norm $\norm[0]{\cdot}_{H(K,\Omega)}$.
The kernel translate $K(\cdot, x)$ is an element of $H(K, \Omega)$ for every $x \in \Omega$ and the kernel has the \emph{reproducing property}
\begin{equation*}
  \inprod{K(\cdot, x)}{f}_{H(K, \Omega)} = f(x) \quad \text{ for all } \quad x \in \Omega \: \text{ and } \: f \in H(K, \Omega).
\end{equation*}
It is usually not straightforward to determine whether or not a given function is an element of $H(K, \Omega)$.
However, the RKHS of a kernel which satisfies \Cref{assumption:sobolev-kernel} on the rate of decay of its Fourier transform is a Sobolev space; see \Cref{sec:sobolev-spaces}.
For more information on RKHSs we refer the reader to \citet{Berlinet2004} and Chapters~10 and~16 in \citet{Wendland2005}.

We are interested in optimal interpolation in an RKHS.
Let $f \colon \Omega \to \R$ be any function (i.e., not necessarily an element of the RKHS) that is to be interpolated at a set of distinct points $X = \{x_i\}_{i=1}^n \subset \Omega$.
The \emph{kernel interpolant} $s_{f,X}$ is the unique minimum norm interpolant to~$f$ at these points:
\begin{equation} \label{eq:minimum-norm-interpolation}
  s_{f, X} = \argmin_{ s \in H(K, \Omega) } \Set[\big]{ \norm[0]{s}_{H(K, \Omega)} }{ s(x_i) = f(x_i) \text{ for every } i = 1,\ldots, n}.
\end{equation}
The kernel interpolant has the explicit linear-algebraic form
\begin{equation} \label{eq:conditional-mean-s-proofs}
  s_{f, X}(x) = K(x, X)^\T K(X, X)^{-1} f(X),
\end{equation}
which equals the conditional mean in~\eqref{eq:conditional-mean} when $m \equiv 0$.
This is the famous equivalence between Gaussian process interpolation and optimal interpolation in an RKHS whose origins can be traced back at least to the work of \citet{KimeldorfWahba1970}.
From~\eqref{eq:conditional-mean-s-proofs} it is straightforward to compute that~\citep[e.g.,][Section~5.1]{Fasshauer2011}
\begin{equation} \label{eq:s-norm-explicit}
  \norm[0]{s_{f,X}}_{H(K, \Omega)}^2 = f(X)^\T K(X, X)^{-1} f(X),
\end{equation}
which equals the data-fit term in~\eqref{eq:ell-function} for $m \equiv 0$.
Note that a particular implication of~\eqref{eq:minimum-norm-interpolation} and~\eqref{eq:s-norm-explicit} is that $f(X)^\T K(X, X)^{-1} f(X) \leq \norm[0]{f}_{H(K, \Omega)}^2$ if $f \in H(K, \Omega)$.
How these properties of $s_{f, X}$ follow is explained in more detail in the proof of \Cref{prop:interpolant-norm-bound} concerning interpolation based on general linear data.
For the conditional variance we use the notation 
\begin{equation} \label{eq:conditional-variance-proofs}
  P_X(x)^2 = K(x, x) - K(x, X)^\T K(X, X)^{-1} K(X, x),
\end{equation}
which makes the dependency on the points $X$ explicit.
Now, for every $x \in \Omega$ it holds that~\citep[e.g.,][Theorem~11.4]{Wendland2005}
\begin{equation} \label{eq:RKHS-error-estimate}
  \abs[0]{ f(x) - s_{f,X}(x) } \leq \norm[0]{f}_{H(K,\Omega)} P_X(x)
\end{equation}
if $f \in H(K, \Omega)$, so that the conditional standard deviation controls the interpolation error.

\subsection{On Notation} \label{sec:on-notation}

The proofs require notation that is more expressive than what we have used elsewhere in this article.
Therefore the conditional variance in~\eqref{eq:conditional-variance} equals the conditional variance~\eqref{eq:conditional-variance-proofs} whose dependency on the covariates has been made explicit.
Similarly, the function $s$ in~\eqref{eq:conditional-mean-s} equals the kernel interpolant $s_{f,X}$ in~\eqref{eq:conditional-mean-s-proofs} for any function $f$ such that $f(X) = Y_m$.
It is often necessary or useful to indicate that various quantities depend on the kernel parameters (either~$\theta$ or~$\lambda$).
We use subscripts for this purpose.
Subscripts are also used as shorthands for point sets formed by removing some elements of $X = \{x_i\}_{i=1}^n$ in the following way: $X_k = \{x_i\}_{i=1}^k$ and $X_{n,k} = X \setminus \{x_k\}$.
Analogous notation is used for the conditional standard deviation and mean and the kernel interpolant constructed at these point sets, so that
\begin{equation*}
  P_{\theta, k} = P_{\theta,X_k} = P_{\theta, \{x_1, \ldots, x_k\}} \quad \text{ and } \quad P_{\theta, n, k} = P_{\theta, X_{n,k}} = P_{\theta, X \setminus \{x_k\}}
\end{equation*}
and
\begin{equation*}
  s_{\theta, k} = s_{\theta, f, X_k} = s_{\theta, f, \{x_1, \ldots, x_k\}} \quad \text{ and } \quad s_{\theta, n, k} = s_{\theta, f, X_{n,k}} = s_{\theta, f, X \setminus \{x_k\}}
\end{equation*}
for $k \leq n$ and $f$ such that $f(X) = Y_m$.
These notational conventions are reintroduced preceding their use in the proofs.

\subsection{Sobolev Spaces} \label{sec:sobolev-spaces}

For $\alpha \geq 0$, the Sobolev space $W_2^\alpha(\R^d)$ consists of square-integrable functions $f \colon \R^d \to \R$ such that
\begin{equation} \label{eq:Sobolev-norm}
  \norm[0]{f}_{W_2^\alpha(\R^d)}^2 = \int_{\R^d} ( 1 + \norm[0]{\xi}^2 )^\alpha \abs[0]{ \widehat{f}(\xi) }^2 \dif \xi < \infty.
\end{equation}
On $\Omega \subset \R^d$, the Sobolev space $W_2^\alpha(\Omega)$ consists of those $f \colon \Omega \to \R$ which admit an extension $f_e \in W_2^\alpha(\R^d)$ such that $f_e|_\Omega = f$.
The norm of $W_2^\alpha(\Omega)$ is
\begin{equation} \label{eq:sobolev-restriction}
  \norm[0]{f}_{W_2^\alpha(\Omega)} = \inf_{ f_e \in W_2^\alpha(\R^d) } \Set[\big]{ \norm[0]{f_e}_{W_2^\alpha(\R^d)}}{ f_e|_\Omega = f}.
\end{equation}
If $\alpha \in \N_0$, $W_2^\alpha(\R^d)$ consists of functions whose weak derivatives up to order $\alpha$ exist and are square-integrable.
It is a standard result~\citep[e.g.,][Corollary~10.13]{Wendland2005} that for a kernel $K$ which satisfies \Cref{assumption:sobolev-kernel} the RKHS $H(K, \R^d)$ is norm-equivalent to $W_2^\alpha(\R^d)$.
This is to say that $H(K, \R^d)$ and $W_2^\alpha(\R^d)$ are equal as sets and that there are positive constants $C_1$ and $C_2$ such that
\begin{equation*}
  C_1 \norm[0]{f}_{W_2^\alpha(\R^d)} \leq \norm[0]{f}_{H(K, \R^d)} \leq C_2 \norm[0]{f}_{W_2^\alpha(\R^d)}
\end{equation*}
for every $f \in H(K, \R^d)$.
An analogous result carries over to $H(K, \Omega)$, which is related to $H(K, \R^d)$ in the same way as $W_2^\alpha(\Omega)$ is to $W_2^\alpha(\R^d)$ via~\eqref{eq:sobolev-restriction}.
We use the following two facts in the proof of \Cref{thm:main-theorem}:
\begin{itemize}
\item Let $B$ be any open ball centered at the origin. Then constant functions are contained in $W_2^\alpha(\Omega)$ for any $\alpha \geq 0$ because one can construct a bump function which is constant in $B$ and whose Fourier transform decays with a super-algebraic rate.
\item
  The function $\Phi$, which defines $K$ in \Cref{assumption:sobolev-kernel}, is Hölder continuous with the exponent \sloppy{${\beta(\alpha) = \min\{1, \alpha - d/2 \} > 0}$} on any sufficiently regular domain $\Omega$ (e.g., an open ball).
  That is, there is a positive constant $C$ such that
  \begin{equation} \label{eq:holder-condition}
    \abs[0]{ \Phi(0) - \Phi(x) } \leq C \norm[0]{x}^{\beta(\alpha)}
  \end{equation}
  for any $x \in \Omega$.
  This assertion is a consequence of the classical inclusion relation between Sobolev and Hölder spaces~\citep[e.g.,][Remark~2 on p.\@~206]{Triebel1978}.
  That $\Phi$ is an element of $W_2^\alpha(\R^d)$ is easy to verify using~\eqref{eq:Phi-bounds} and~\eqref{eq:Sobolev-norm}.
\end{itemize}

\subsection{Proof of \Cref{thm:main-theorem}} \label{sec:proof-main}

We split the proof of \Cref{thm:main-theorem}, which is repeated below, in two.

\maintheorem*

\Cref{eq:mle-infinite} is a rather straightforward consequence of~\eqref{eq:s-norm-explicit} when one interprets $\lambda$ as a scaling of the covariate set instead of a kernel parameter, while proving \Cref{eq:mle-finite} requires some more work, including upper and lower bounds for the conditional variance.
Most of the proof of \Cref{eq:mle-infinite} is contained in the following lemmas, which will be used again in \Cref{sec:proofs-what-does-not-help}.

  \begin{lemma}[Continuity of the data-fit and model complexity] \label{lemma:continuity}
  Suppose that $\Phi$ is continuous and $n \geq 1$.
  Then the functions
  \begin{equation*}
    f_\textup{df}(\lambda) = Y_m^\T K_\lambda(X, X)^{-1} Y_m \quad \text{ and } \quad f_\textup{mc}(\lambda) = \log \det K_\lambda(X, X)
  \end{equation*}
  are well-defined and continuous on $(0, \infty)$.
\end{lemma}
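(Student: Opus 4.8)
The plan is to reduce the statement to two elementary observations: that $K_\lambda(X, X)$ is a continuous matrix-valued function of $\lambda$ whose entries are built from the continuous function $\Phi$, and that this matrix remains positive-definite—hence invertible with strictly positive determinant—for every $\lambda > 0$. Continuity of the two scalar functions then follows from the continuity of the determinant and of matrix inversion on the open set of invertible matrices.

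First I would record the scaling identity $K_\lambda(X, X) = K_1(X/\lambda, X/\lambda)$, where $X/\lambda = \{x_1/\lambda, \ldots, x_n/\lambda\}$, which holds because $K_\lambda(x_i, x_j) = \Phi((x_i - x_j)/\lambda) = K_1(x_i/\lambda, x_j/\lambda)$. Since the points $x_1, \ldots, x_n$ are distinct and $\lambda > 0$, the scaled points $x_i/\lambda$ are also distinct, so the positive-definiteness condition~\eqref{eq:pd-condition} on the kernel guarantees that $K_\lambda(X, X)$ is a positive-definite matrix. In particular $K_\lambda(X, X)$ is nonsingular and $\det K_\lambda(X, X) > 0$ for every $\lambda \in (0, \infty)$, which establishes that both $f_\textup{df}$ and $f_\textup{mc}$ are well-defined on $(0, \infty)$.

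For continuity I would argue entrywise. Each entry $\Phi((x_i - x_j)/\lambda)$ is the composition of the continuous function $\Phi$ with the map $\lambda \mapsto (x_i - x_j)/\lambda$, which is continuous on $(0, \infty)$; hence $\lambda \mapsto K_\lambda(X, X)$ is continuous as a map into $\R^{n \times n}$. The determinant is a polynomial in the matrix entries, so $\lambda \mapsto \det K_\lambda(X, X)$ is continuous and strictly positive, and composing with $\log$ (continuous on $(0, \infty)$) yields continuity of $f_\textup{mc}$. For $f_\textup{df}$, the adjugate (Cramer) formula expresses each entry of $K_\lambda(X, X)^{-1}$ as a cofactor polynomial in the entries divided by the nonvanishing determinant, so $\lambda \mapsto K_\lambda(X, X)^{-1}$ is continuous; the quadratic form $Y_m^\T K_\lambda(X, X)^{-1} Y_m$ is then continuous as a finite sum of products of continuous functions.

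There is no genuine obstacle here: the only point that requires care is that well-definedness is not a consequence of continuity alone but relies on nonsingularity of $K_\lambda(X, X)$ at \emph{every} $\lambda > 0$, which is precisely where the positive-definiteness of the kernel is invoked via the scaling identity. Everything else is a routine application of the continuity of the determinant and of matrix inversion on the open set of invertible matrices.
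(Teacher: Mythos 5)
Your proof is correct and follows essentially the same route as the paper's: the scaling identity $K_\lambda(X,X) = K(X_\lambda, X_\lambda)$ combined with positive-definiteness of the kernel at distinct points gives well-definedness and $\det K_\lambda(X,X) > 0$, entrywise continuity of $\Phi((x_i - x_j)/\lambda)$ gives continuity of the determinant, and Cramer's rule handles the inverse in the data-fit term. The only difference is that you spell out the routine steps in slightly more detail than the paper does.
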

\begin{proof}
  Define $X_\lambda = \{x_i / \lambda\}_{i=1}^n$ and observe that $K_\lambda(X, X) = K(X_\lambda, X_\lambda)$.
  Since $K$ is a positive-definite kernel and the covariates $X$ are distinct, this shows that $K_\lambda(X, X)$ is positive-definite and hence non-singular for every $\lambda > 0$.
  In particular, $\det K_\lambda(X, X) > 0$ for every $\lambda > 0$ by positive-definiteness.
  Therefore the functions $f_\textup{df}$ and $f_\textup{mc}$ are well-defined.
  Because $K(x, y) = \Phi(x - y)$ is continuous, each element of $K_\lambda(X, X)$ is a continuous function of $\lambda$.
  From the definition of the determinant it immediately follows that $f_\textup{mc}$ is continuous.
  The continuity of $f_\textup{df}$ is then a consequence of, for example, Cramer's rule and the positivity of $\det K_\lambda(X, X)$.
\end{proof}

\begin{lemma}[Boundedness of the data-fit term] \label{lemma:data-fit-bounded}
  Suppose that $K$ satisfies \Cref{assumption:sobolev-kernel} and $n \geq 1$.
  If the data $Y$ are $m$-constant, then there is a constant $a > 0$ such that
  \begin{equation*}
    \sup_{ \lambda > 0 } Y_m^\T K_\lambda(X, X)^{-1} Y_m \leq a.
  \end{equation*}
\end{lemma}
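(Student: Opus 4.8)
The plan is to exploit the interpretation of $\lambda$ as a scaling of the covariate set, already used in \Cref{lemma:continuity}, together with the variational characterisation of the data-fit term as a squared RKHS norm. Since the data are $m$-constant we may write $Y_m = (c, \ldots, c)$ for some $c \in \R$. Setting $X_\lambda = \{x_i/\lambda\}_{i=1}^n$ gives $K_\lambda(X,X) = K(X_\lambda, X_\lambda)$, so the data-fit term equals $Y_m^\T K(X_\lambda, X_\lambda)^{-1} Y_m$. By \eqref{eq:s-norm-explicit} this is precisely the squared RKHS norm of the minimum-norm interpolant to the constant value $c$ at the scaled points $X_\lambda$. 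The task is then to bound this norm uniformly in $\lambda > 0$, and I would split the range of $\lambda$ into a large-$\lambda$ regime and a bounded regime.

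The main step, and the one that uses \Cref{assumption:sobolev-kernel} essentially, is the large-$\lambda$ regime. Fix an open ball $B$ centred at the origin and choose $\lambda_0 > 0$ large enough that $\norm[0]{x_i}/\lambda \leq \mathrm{radius}(B)$ for every $i$ and every $\lambda \geq \lambda_0$, so that $X_\lambda \subset B$ for all such $\lambda$. Because $K$ satisfies \Cref{assumption:sobolev-kernel}, the space $H(K, B)$ is norm-equivalent to $W_2^\alpha(B)$, and, as recorded in \Cref{sec:sobolev-spaces}, the latter contains the constant functions on the bounded set $B$. The constant function $f_c \equiv c$ therefore lies in $H(K, B)$ and interpolates the data $Y_m$ at $X_\lambda$. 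Invoking the minimum-norm property \eqref{eq:minimum-norm-interpolation} together with the bound $f(X)^\T K(X,X)^{-1} f(X) \leq \norm[0]{f}_{H(K,B)}^2$ noted after \eqref{eq:s-norm-explicit}, I obtain
\begin{equation*}
  Y_m^\T K(X_\lambda, X_\lambda)^{-1} Y_m \leq \norm[0]{f_c}_{H(K,B)}^2 =: a_1 \quad \text{for all } \lambda \geq \lambda_0,
\end{equation*}
and the right-hand side is independent of $\lambda$. I expect this to be the crux: the whole point of the Sobolev assumption is to place constants inside the RKHS over a bounded domain, which furnishes a single competitor interpolant valid for all large $\lambda$.

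For the complementary range $\lambda \in (0, \lambda_0]$ I would argue by continuity and a finite limit at the boundary. By \Cref{lemma:continuity} the function $f_\textup{df}(\lambda) = Y_m^\T K_\lambda(X,X)^{-1} Y_m$ is continuous on $(0,\infty)$. As $\lambda \to 0^+$ the points $X_\lambda$ separate, and since $\widehat{\Phi} \in L^1(\R^d)$ (as $\alpha > d/2$ forces $2\alpha > d$), the Riemann--Lebesgue lemma gives $\Phi(z) \to 0$ as $\norm[0]{z} \to \infty$; because the covariates are distinct, every off-diagonal entry of $K(X_\lambda, X_\lambda)$ then tends to $0$ while the diagonal stays at $\Phi(0)$, so $K(X_\lambda, X_\lambda) \to \Phi(0) I_n$ and $f_\textup{df}(\lambda) \to \Phi(0)^{-1} \norm[0]{Y_m}^2$. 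Hence $f_\textup{df}$ extends continuously to the compact interval $[0, \lambda_0]$ and is bounded there by some $a_2 > 0$. Taking $a = \max\{a_1, a_2\}$ yields the uniform bound and completes the argument.
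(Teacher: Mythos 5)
Your proof is correct and follows essentially the same route as the paper's: rescaling the covariates to $X_\lambda$, using that constants lie in $H(K,B)$ to invoke the minimum-norm bound \eqref{eq:s-norm-explicit} for large $\lambda$, and handling small $\lambda$ via \Cref{lemma:continuity} and the finite limit $\Phi(0)^{-1}\norm[0]{Y_m}^2$ as $\lambda \to 0$. Your explicit Riemann--Lebesgue justification that $\Phi$ vanishes at infinity is in fact slightly more careful than the paper's appeal to continuity and integrability of $\Phi$ alone.
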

\begin{proof}
  Because the data $Y$ are $m$-constant, we can write $Y_m = Y - m(X) = f(X)$ for some constant function $f$.
  Define $X_\lambda = \{x_i / \lambda\}_{i=1}^n$ and observe that, since $f(X_\lambda)$ does not depend on~$\lambda$,
  \begin{equation*}
    Y_m^\T K_\lambda(X, X)^{-1} Y_m = f(X_\lambda)^\T K(X_\lambda, X_\lambda)^{-1} f(X_\lambda).
  \end{equation*}
  The RKHS $H(K, B)$ contains constant functions if $B$ is any open ball centered at the origin by the results in \Cref{sec:sobolev-spaces}.
  Because $X$ contains a finite number of points, we can trivially select $B$ such that $X \subset B$.
  Then the set $X_\lambda$ is also contained in $B$ whenever $\lambda \geq 1$.
  Therefore $f \in H(K, B)$ and it follows from~\eqref{eq:minimum-norm-interpolation} and~\eqref{eq:s-norm-explicit} that
  \begin{equation} \label{eq:min-norm-interpolation-in-proof}
    f(X_\lambda)^\T K(X_\lambda, X_\lambda)^{-1} f(X_\lambda) \leq \norm[0]{f}_{H(K,B)}^2 
  \end{equation}
  if $\lambda \geq 1$.
  Because $K(x, y) = \Phi(x - y)$ for $\Phi$ which is continuous and integrable on $\R^d$,
  \begin{equation*}
    K\bigg( \frac{x_i}{\lambda}, \frac{x_i}{\lambda} \bigg) = \Phi(0) > 0 \quad \text{ and } \quad \lim_{\lambda \to 0} K\bigg( \frac{x_i}{\lambda}, \frac{x_j}{\lambda} \bigg) = \lim_{\lambda \to 0} \Phi\bigg( \frac{x_i - x_j}{\lambda} \bigg) = 0
  \end{equation*}
  for all $i \neq j$.
  That is, $K_\lambda(X, X) = K(X_\lambda, X_\lambda)$ tends to a non-zero diagonal matrix as $\lambda \to 0$.
  Thus by \Cref{lemma:continuity}, which guarantees the $\lambda$-continuity of the data-fit term,
  \begin{equation} \label{eq:data-fit-lim-lambda=0}
    \lim_{\lambda \to 0} Y_m^\T K_\lambda(X, X)^{-1} Y_m = \Phi(0)^{-1} \norm[0]{Y_m}^2 < \infty.
  \end{equation}
  From~\eqref{eq:min-norm-interpolation-in-proof} and~\eqref{eq:data-fit-lim-lambda=0} and \Cref{lemma:continuity} we conclude that
there is a constant $a > 0$ such that
  \begin{equation*}
    Y_m^\T K_\lambda(X, X)^{-1} Y_m = f(X_\lambda)^\T K(X_\lambda, X_\lambda)^{-1} f(X_\lambda) \leq a
  \end{equation*}
  for every $\lambda > 0$.
\end{proof}

\noindent\textbf{Proof of \Cref{eq:mle-infinite}}
  By \Cref{lemma:data-fit-bounded}, there is $a > 0$ such that
  \begin{equation} \label{eq:mle-1st-term-bound}
    0 \leq Y_m^\T K_\lambda(X, X)^{-1} Y_m \leq a
  \end{equation}
  for all $\lambda > 0$.
  Stationarity and continuity of $\Phi$ imply that $K_\lambda(X, X)$ converges to the identity matrix times $\Phi(0)$ as $\lambda \to 0$ and to the singular matrix of $\Phi(0)$'s as $\lambda \to \infty$.
  Thus it follows from \Cref{lemma:continuity} that
  \begin{equation} \label{eq:mle-2nd-term-limit}
    \log \det K_\lambda(X, X) \to -\infty
  \end{equation}
  if and only if $\lambda \to \infty$.
  By combining~\eqref{eq:mle-1st-term-bound} and~\eqref{eq:mle-2nd-term-limit} we conclude that 
  \begin{equation*}
    \ell( \lambda \mid Y ) = Y_m^\T K_\lambda(X, X)^{-1} Y_m + \log \det K_\lambda(X, X) \to -\infty
  \end{equation*}
  if and only if $\lambda \to \infty$.
  Therefore $\lambda_\ML = \argmin_{ \lambda > 0} \ell( \lambda \mid Y ) = \infty$.
  \qed

Three auxiliary results are needed to prove \Cref{eq:mle-finite}.
The first of these---or its variants---is well known in scattered data approximation literature~\citep[e.g.,][]{Schaback1995}.
The version that we need here is contained in the proof of Theorem~4.4 in \citet{Karvonen2020}.

\begin{proposition} \label{prop:Sobolev-P-lower-bound}
  Suppose that $K$ satisfies \Cref{assumption:sobolev-kernel} and let $X = \{x_i\}_{i=1}^n$ be any set of distinct points in $\R^d$.
  Define $\norm[0]{x - X} = \min_{i=1,\ldots,n} \norm[0]{x - x_i}$.
  Then there is a positive constant~$C$, which does not depend on $x$ or $X$, such that
  \begin{equation*}
    P_X(x)^2 \geq C \norm[0]{x - X}^{2\alpha - d}
  \end{equation*}
  for any $x \in \R^d$ for which $\norm[0]{x - X} \leq 1$, where $\alpha > d/2$ is the constant in \Cref{assumption:sobolev-kernel}.
\end{proposition}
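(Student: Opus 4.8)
The plan is to bound $P_X(x)$ from below by exhibiting a single, cleverly localised test function in the RKHS whose kernel interpolant vanishes. The starting point is the error estimate~\eqref{eq:RKHS-error-estimate}: for any $f \in H(K, \R^d)$ one has $\abs[0]{f(x) - s_{f,X}(x)} \leq \norm[0]{f}_{H(K,\R^d)} P_X(x)$. If $f$ happens to vanish at every data point, then its interpolant is identically zero (directly from the linear-algebraic form $s_{f,X} = K(\cdot, X)^\T K(X,X)^{-1} f(X)$ with $f(X) = 0$), so the estimate collapses to $\abs[0]{f(x)} \leq \norm[0]{f}_{H(K,\R^d)} P_X(x)$, i.e.\ $P_X(x) \geq \abs[0]{f(x)} / \norm[0]{f}_{H(K,\R^d)}$. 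The whole proof reduces to constructing such an $f$ that is large at $x$ yet cheap in norm.

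First I would fix, once and for all, a smooth bump $g \colon \R^d \to \R$ supported in the open unit ball with $g(0) = 1$; being compactly supported and infinitely differentiable, $g \in W_2^\alpha(\R^d)$. For the given $x$ and $X$, write $h = \norm[0]{x - X} \leq 1$ and set $f(z) = g((z-x)/h)$. Because the support of $f$ is contained in the ball of radius $h$ about $x$ while every $x_i$ satisfies $\norm[0]{x_i - x} \geq h$, we have $f(x_i) = 0$ for all $i$, so the preceding paragraph applies; moreover $f(x) = g(0) = 1$.

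The key computation is the behaviour of the Sobolev norm under this rescaling. Using the Fourier scaling identity $\widehat{f}(\xi) = h^d \mathrm{e}^{-\mathrm{i}\xi^\T x}\widehat{g}(h\xi)$ in the definition~\eqref{eq:Sobolev-norm} and substituting $\eta = h\xi$ gives
\[
  \norm[0]{f}_{W_2^\alpha(\R^d)}^2 = h^d \int_{\R^d} (1 + h^{-2}\norm[0]{\eta}^2)^\alpha \abs[0]{\widehat{g}(\eta)}^2 \dif \eta.
\]
Since $h \leq 1$, we have $1 + h^{-2}\norm[0]{\eta}^2 \leq h^{-2}(1 + \norm[0]{\eta}^2)$, and therefore $\norm[0]{f}_{W_2^\alpha(\R^d)}^2 \leq h^{d-2\alpha} \norm[0]{g}_{W_2^\alpha(\R^d)}^2$. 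Invoking the norm-equivalence of $H(K, \R^d)$ and $W_2^\alpha(\R^d)$ from \Cref{sec:sobolev-spaces} (with upper constant $C_2$) then yields $\norm[0]{f}_{H(K,\R^d)} \leq C_2 \norm[0]{g}_{W_2^\alpha(\R^d)} h^{(d-2\alpha)/2}$. Combining this with $P_X(x) \geq 1 / \norm[0]{f}_{H(K,\R^d)}$ and squaring produces
\[
  P_X(x)^2 \geq \frac{1}{C_2^2 \norm[0]{g}_{W_2^\alpha(\R^d)}^2} \, h^{2\alpha - d} = C \norm[0]{x-X}^{2\alpha-d},
\]
with $C = (C_2 \norm[0]{g}_{W_2^\alpha(\R^d)})^{-2}$ depending only on the fixed bump $g$ and the norm-equivalence constant, hence independent of $x$ and $X$.

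The main obstacle is conceptual rather than computational: one must realise that localising the test function to a ball avoiding all data points kills the interpolant and makes the error estimate sharp enough to reverse into a lower bound. After that, the only delicate point is that the dilation exponent comes out as $2\alpha - d$ precisely because $h \leq 1$ lets one pull $h^{-2}$ out of the Sobolev weight; the positivity $2\alpha - d > 0$, guaranteed by $\alpha > d/2$ in \Cref{assumption:sobolev-kernel}, is what makes the bound informative as $h \to 0$.
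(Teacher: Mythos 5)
Your proof is correct. The paper does not supply its own argument for this proposition, deferring instead to the proof of Theorem~4.4 in \citet{Karvonen2020}; your construction---a bump function dilated to be supported in the ball of radius $\norm[0]{x-X}$ about $x$, so that its interpolant vanishes and the error estimate~\eqref{eq:RKHS-error-estimate} reverses into a lower bound, with the exponent $2\alpha-d$ emerging from the Fourier-side scaling of the Sobolev norm---is precisely the standard argument from the scattered-data approximation literature that underlies the cited result.
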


The second auxiliary result, which is standard and essentially Exercise~8.66 in \citet{Iske2018}, gives a rough upper bound on the conditional variance under \Cref{assumption:sobolev-kernel}.

\begin{proposition} \label{prop:Sobolev-P-upper-bound}
  Suppose that $K$ satisfies \Cref{assumption:sobolev-kernel} and let $X = \{x_i\}_{i=1}^n$ be any set of distinct points in $\R^d$.
  Let $\norm[0]{x - X} = \min_{i=1,\ldots,n} \norm[0]{x - x_i}$ and $\beta(\alpha) = \min\{1, \alpha - d/2\} > 0$, where $\alpha > d/2$ is the constant in \Cref{assumption:sobolev-kernel}.
  Then there is a positive constant $C$, which does not depend on $x$ or $X$, such that
  \begin{equation*}
    P_X(x)^2 \leq C \norm[0]{x - X}^{\beta(\alpha)}
  \end{equation*}
  for any $x \in \R^d$ for which $\norm[0]{x - X}$ is sufficiently small.
\end{proposition}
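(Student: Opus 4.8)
The plan is to bound the conditional variance from above by exploiting its well-known variational characterisation as a minimal interpolation error. Expanding the squared RKHS norm and setting its gradient to zero shows that, for a positive-definite $K$ on $\R^d$,
\[
  P_X(x)^2 = \min_{u \in \R^n} \norm[0]{ K(\cdot, x) - \sum_{i=1}^n u_i K(\cdot, x_i) }_{H(K, \R^d)}^2 ,
\]
the minimiser being $u = K(X,X)^{-1} K(X, x)$, which recovers the Schur-complement expression~\eqref{eq:conditional-variance-proofs}. Since the right-hand side is a minimum over all coefficient vectors, \emph{any} fixed choice of $u$ yields an upper bound. First I would let $x_j$ be a nearest neighbour of $x$ in $X$, so that $\norm[0]{x - x_j} = \norm[0]{x - X}$, and take the crude one-point choice $u_j = 1$ with $u_i = 0$ for $i \neq j$. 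This discards all design points except the nearest one and gives
\[
  P_X(x)^2 \leq \norm[0]{ K(\cdot, x) - K(\cdot, x_j) }_{H(K, \R^d)}^2 .
\]

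Next I would evaluate this single kernel increment using the reproducing property and stationarity $K(a, b) = \Phi(a - b)$, obtaining
\[
  \norm[0]{ K(\cdot, x) - K(\cdot, x_j) }_{H(K, \R^d)}^2 = K(x, x) - 2 K(x, x_j) + K(x_j, x_j) = 2\big( \Phi(0) - \Phi(x - x_j) \big).
\]
Finally I would invoke the Hölder continuity of $\Phi$ recorded in~\eqref{eq:holder-condition}, which under \Cref{assumption:sobolev-kernel} holds with exponent $\beta(\alpha) = \min\{1, \alpha - d/2\} > 0$ on a fixed open ball $B$ centred at the origin, with a constant $C$ depending only on $\Phi$ and $B$. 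Provided $\norm[0]{x - X}$ is small enough that $x - x_j \in B$, this delivers
\[
  P_X(x)^2 \leq 2\big( \Phi(0) - \Phi(x - x_j) \big) \leq 2 C \norm[0]{x - x_j}^{\beta(\alpha)} = 2 C \norm[0]{x - X}^{\beta(\alpha)},
\]
which is the claimed bound with constant $2C$ independent of $x$ and $X$.

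There is no genuine obstacle here; the argument is short and standard. The only point requiring care is that the constant in~\eqref{eq:holder-condition} be uniform over the relevant displacements $x - x_j$, which is secured by fixing the ball $B$ in advance and restricting to $\norm[0]{x - X}$ sufficiently small so that $x - x_j$ lies in $B$. The essential idea is conceptual rather than computational: because the power function equals the smallest possible interpolation error over all coefficient vectors, it suffices to estimate the error of the trivial ``interpolant'' built from the single closest design point, which reduces the whole bound to one kernel increment controlled by the modulus of continuity of $\Phi$.
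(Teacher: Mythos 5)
Your proof is correct and takes essentially the same route as the paper's: both arguments reduce the power function to the single nearest design point $x_j$ and then invoke the H\"older continuity of $\Phi$ in~\eqref{eq:holder-condition}, which is where \Cref{assumption:sobolev-kernel} enters. The only (immaterial) difference is the mechanism of reduction---you substitute the crude coefficient vector $u_j = 1$, $u_i = 0$ for $i \neq j$ into the variational characterisation, whereas the paper cites the monotonicity $P_X(x) \leq P_{\{x^*\}}(x)$ and evaluates the exact one-point power function $\Phi(0) - \Phi(x - x^*)^2 / \Phi(0)$, which is in any case bounded above by your quantity $2\big(\Phi(0) - \Phi(x - x^*)\big)$.
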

\begin{proof}
  Let $x^* \in X$ be such that $\norm[0]{x - x^*} = \norm[0]{x - X}$.
  Because the standard deviation is a non-decreasing function in that $P_X(x) \leq P_{X'}(x)$ for any $x$ if $X' \subset X$~\citep[e.g.,][Theorem~16.11]{Wendland2005}, we have
  \begin{equation} \label{eq:P-decreasing-function}
    P_X(x) \leq P_{\{x^*\}}(x).
  \end{equation}
  Using the stationarity assumption and~\eqref{eq:conditional-variance-proofs} we write
  \begin{equation*}
    \begin{split}
    P_{\{x^*\}}(x)^2 = K(x, x) - \frac{K(x, x^*)^2}{K(x^*, x^*)} &= \Phi(0) - \frac{\Phi( x - x^*)^2}{\Phi(0)} \\
    &= \frac{1}{\Phi(0)} \big( \Phi(0) + \Phi( x - x^*) \big) \big( \Phi(0) - \Phi(x - x^*) \big).
    \end{split}
  \end{equation*}
  Applying the Hölder condition~\eqref{eq:holder-condition} to $\abs[0]{\Phi(0) - \Phi(x - x^*)}$ and using $\Phi(0) \geq \Phi(x - x^*)$, which follows from the positive-definiteness of $K(x, y) = \Phi(x - y)$, yields the estimate
  \begin{equation*}
    P_{\{x^*\}}(x)^2 \leq C \norm[0]{x - x^*}^{\beta(\alpha)} = C \norm[0]{x - X}^{\beta(\alpha)}
  \end{equation*}
  for a certain positive constant $C$ which depends only on $\Phi$.
  Using this bound in~\eqref{eq:P-decreasing-function} concludes the proof.
\end{proof}

Our third auxiliary result is an expression for the modified log-likelihood function.
Although this expression has appeared in the literature~\citep[e.g.,][Section~4.2.2]{XuStein2017}, we have not encountered its proof and therefore provide one based entirely on linear algebra, block matrix inversion, and determinantal identities.
The expressions for the individual terms of the modified log-likelihood function are relatively well known.
For the data-fit term, see \citet[Theorem~6]{SchabackWerner2006} or \citet[Bemerkung 3.1.4]{Muller2008}.
We also point the reader to Section~3 in \citet{Scheuerer2011}.
The expression for the model complexity term appear in literature on determinantal point processes~\citep[e.g.,][Section~2.4]{BardenetHardy2020}.
For the purposes of this proposition and the proof of \Cref{eq:mle-finite} we use the notation
\begin{equation*}
  P_{\theta,k}(x) = P_{\theta, \{x_1, \ldots, x_k\}}(x) \quad \text{ and } \quad s_{\theta,k}(x) = s_{\theta,f,\{x_1, \ldots, x_k\}}(x)
\end{equation*}
for $k \leq n$.

\begin{proposition} \label{prop:ell-representation}
  If the points $X = \{x_i\}_{i=1}^n$ are distinct, then
  \begin{equation} \label{eq:data-fit-representation} 
    Y_m^\T K_\theta(X, X)^{-1} Y_m = \sum_{k=0}^{n-1} \bigg( \frac{y_{k+1} - m(x_{k+1}) - s_{\theta,k}(x_{k+1})}{P_{\theta,k}(x_{k+1})} \bigg)^2
  \end{equation}
  and
  \begin{equation} \label{eq:model-complexity-representation}
    \det K_\theta(X, X) = \prod_{k=0}^{n-1} P_{\theta,k}(x_{k+1})^2,
  \end{equation}
  where $P_{\theta,0}(x_1)^2 = K_\theta(x_1, x_1)$ and $s_{\theta,0}(x_1) = 0$.
\end{proposition}
\begin{proof}
  Denote $y_{m,k} = y_k - m(x_k)$ and $Y_{m,k} = (y_{m,1}, \ldots, y_{m,k}) \in \R^k$ so that
  \begin{equation*}
    Y_m^\T K_\theta(X, X)^{-1} Y_m = 
    \begin{pmatrix}
      y_{m,n} \\
      Y_{m,n-1}
    \end{pmatrix}^\T
    \begin{pmatrix}
      K_\theta(x_n, x_n) & K_\theta(x_n, X_{n-1})^\T \\
      K_\theta(x_n, X_{n-1}) & K_\theta(X_{n-1}, X_{n-1}) 
    \end{pmatrix}^{-1}
    \begin{pmatrix}
      y_{m,n} \\
      Y_{m,n-1}
    \end{pmatrix}.
  \end{equation*}
  The block matrix inversion formula, a few lines of straightforward algebra, and~\eqref{eq:conditional-mean} and~\eqref{eq:conditional-variance} then yield
  \begin{equation*}
    Y_m^\T K_\theta(X, X)^{-1} Y_m = \bigg( \frac{y_{m,n} - s_{\theta,n-1}(x_n) }{P_{\theta,n-1}(x_n)} \bigg)^2 + Y_{m,n-1}^\T K_\theta(X_{n-1}, X_{n-1})^{-1} Y_{m,n-1},
  \end{equation*}
  iteration of which yields the form~\eqref{eq:data-fit-representation} for the data-fit term.
  The block determinant identity and the expression~\eqref{eq:conditional-variance} for the conditional variance yield
  \begin{equation*}
    \begin{split}
    \det K_\theta(X, X) ={}&
    \begin{pmatrix}
      K_\theta(x_n, x_n) & K_\theta(x_n, X_{n-1})^\T \\
      K_\theta(x_n, X_{n-1}) & K_\theta(X_{n-1}, X_{n-1}) 
    \end{pmatrix} \\
    ={}& \big[ K_\theta(x_n, x_n) - K_\theta(x_n, X_{n-1})^\T K_\theta(X_{n-1}, X_{n-1})^{-1} K_\theta(x_n, X_{n-1}) \big] \\
    &\quad \times \det K_\theta(X_{n-1}, X_{n-1}) \\
    ={}& P_{\theta,n-1}(x_n)^2 \det K_\theta(X_{n-1}, X_{n-1}),
    \end{split}
  \end{equation*}
  iteration of which produces~\eqref{eq:model-complexity-representation}.
\end{proof}

\begin{lemma} \label{lemma:model-complexity-rate}
  Suppose that $K$ satisfies \Cref{assumption:sobolev-kernel} and let $X = \{x_i\}_{i=1}^n$ be any set of distinct points in $\R^d$.
  Then there is a constant~$C$, which does not depend on $\lambda$, such that
  \begin{equation*}
    \log \det K_\lambda(X, X) \geq -(2\alpha - d) n \log \lambda + C
  \end{equation*}
  for every sufficiently large $\lambda > 0$, where $\alpha > d/2$ is the constant in \Cref{assumption:sobolev-kernel}.
\end{lemma}
\begin{proof}
  Let $X_k = \{x_1, \ldots, x_k\}$ and $X_{\lambda,k} = \lambda^{-1} X_k = \{\lambda^{-1} x_i \}_{i=1}^k$.
  Because
  \begin{equation*}
    P_{\lambda,k}(x_{k+1}) = P_{X_{\lambda,k}}( \lambda^{-1} x_{k+1} )
  \end{equation*}
  and
  \begin{equation*}
    \begin{split}
      \norm[0]{\lambda^{-1} x_{k+1} - X_{\lambda,k}} = \min_{i=1,\ldots,k} \norm[0]{ \lambda^{-1} (x_{k+1} - x_i)} &= \lambda^{-1} \min_{i=1,\ldots,k} \norm[0]{x_{k+1} - x_i} \\
      &= \lambda^{-1} \norm[0]{x_{k+1} - X_k},
      \end{split}
  \end{equation*}
  it follows from \Cref{prop:Sobolev-P-lower-bound}, when applied to the points $X_{\lambda,k}$, that, for a positive constant $C$ which does not depend on $k$ or $\lambda$,
  \begin{equation*}
    \begin{split}
      \log [ P_{\lambda,k}(x_{k+1})^2 ] &= \log [ P_{X_{\lambda,k}}( \lambda^{-1} x_{k+1})^2 ] \\
      &\geq - (2\alpha - d) \log \lambda + (2\alpha - d) \log \norm[0]{x_{k+1} - X_k} + \log C
      \end{split}
  \end{equation*}
  when $\lambda$ is large enough that $\norm[0]{\lambda^{-1} x_{k+1} - X_{\lambda,k}} \leq 1$.
  Equation~\eqref{eq:model-complexity-representation} implies that
  \begin{equation*}
      \log \det K_\lambda(X, X) = \sum_{k=0}^{n-1} \log [ P_{\lambda,k}(x_{k+1})^2 ],
  \end{equation*}
  which yields the claim.
\end{proof}

\noindent\textbf{Proof of \Cref{eq:mle-finite}}
  Under the assumption that the data are not $m$-constant we can freely order $\{x_1, \ldots, x_n\}$ such that $y_1 - m(x_1) \neq y_2 - m(x_2)$.
  Then for the second term of the data-fit in~\eqref{eq:data-fit-representation} we have
  \begin{equation} \label{eq:second-term-explicit}
    \frac{y_{2} - m(x_{2}) - s_{\lambda,1}(x_{2})}{P_{\lambda,1}(x_{2})} = \frac{y_{2} - m(x_2) - K_\lambda(x_2, x_1)K_\lambda(x_1, x_1)^{-1}(y_1 - m(x_1)) }{P_{\lambda,1}(x_{2})}.
  \end{equation}
  By the stationarity assumption, the numerator on the right-hand side of~\eqref{eq:second-term-explicit} is
  \begin{equation*}
    y_{2} - m(x_2) - \Phi\bigg( \frac{x_2 - x_1}{\lambda} \bigg)\Phi(0)^{-1} (y_1 - m(x_1)).
  \end{equation*}
  As $\lambda \to \infty$, the numerator therefore tends to $y_{2} - m(x_2) - (y_1 - m(x_1)) \neq 0$.
  Consequently, there is a positive constant $C_1$ independent of $\lambda$ such that
  \begin{equation*}
    \begin{split}
      \ell( \lambda \mid Y ) &= Y_m^\T K_\lambda(X, X)^{-1} Y_m + \log \det K_\lambda(X, X) \\
      &= \sum_{k=0}^{n-1} \bigg( \frac{y_{k+1} - m(x_{k+1}) - s_{\lambda,k}(x_{k+1})}{P_{\lambda,k}(x_{k+1})} \bigg)^2 + \log \det K_\lambda(X, X) \\
      &\geq \frac{C_1}{P_{\lambda,1}(x_{2})^2} + \log \det K_\lambda(X, X)
    \end{split}
  \end{equation*}
  for all sufficiently large $\lambda$, where we have discarded all other terms of the data-fit in~\eqref{eq:data-fit-representation} except the $k=1$ term.
  \Cref{prop:Sobolev-P-upper-bound} yields the upper bound
  \begin{equation} \label{eq:P12-term-upper-bound}
    P_{\lambda,1}(x_{2})^2 \leq C_2 \lambda^{-\beta(\alpha)}
  \end{equation}
  for $\beta(\alpha) > 0$, a certain positive constant $C_2$ independent of $\lambda$, and all sufficiently large $\lambda$.
  This bound and \Cref{lemma:model-complexity-rate} then give
  \begin{equation*}
    \ell( \lambda \mid Y ) \geq \frac{C_1}{P_{\lambda,1}(x_{2})^2} + \log \det K_\lambda(X, X) \geq C_1 C_2^{-1} \lambda^{\beta(\alpha)} - (2\alpha - d) n \log \lambda + C_3
  \end{equation*}
  when $\lambda$ is sufficiently large and where none of the constants depend on $\lambda$.
  Therefore
  \begin{equation*}
    \lim_{ \lambda \to \infty} \ell(\lambda \mid Y) \geq \lim_{\lambda \to \infty} \big( C_1 C_2^{-1} \lambda^{\beta(\alpha)} - (2\alpha - d) n \log \lambda \big) = \infty
  \end{equation*}
  since $\beta(\alpha) > 0$.
  This concludes the proof.
  \qed

We point out that the lower bound on $\log \det K_\lambda(X, X)$ in \Cref{lemma:model-complexity-rate} is of independent interest.
See Theorems~4.5 and~6.3 in \citet{BarthelmeUsevich2021} for other results on the behaviour of $\det K_\lambda(X, X)$ as $\lambda \to \infty$.

\subsection{Proof of \Cref{thm:flat-limit}} \label{sec:proofs-flat-limit}

\flatlimittheorem*
\begin{proof}
  Because the data are $m$-constant, there is a constant function $f \equiv c$ for some $c \in \R$ such that $Y_m = Y - m(X) = f(X)$.
  Recall from \Cref{sec:sobolev-spaces} that \Cref{assumption:sobolev-kernel} implies that $f$ is an element of $H(K, B)$ for an open ball $B$ that can be selected such that $X \subset B$.
  Let $X_\lambda = \{x_i / \lambda \}_{i=1}^n$ and write
  \begin{equation*}
    \mu_\lambda(x) = m(x) + s_\lambda(x) = m(x) + s_{f,X_\lambda}(\lambda^{-1} x).
  \end{equation*}
  The RKHS error estimate~\eqref{eq:RKHS-error-estimate} yields, for any $\lambda \geq 1$,
  \begin{equation} \label{eq:error-estimate-for-flat-limit}
    \abs[0]{ m(x) + c - \mu_\lambda(x) } = \abs[0]{ f(\lambda^{-1} x) - s_{f,X_\lambda}(\lambda^{-1}x) } \leq \norm[0]{f}_{H(K,B)} P_{X_\lambda}(\lambda^{-1} x).
  \end{equation}
  As in the proof of \Cref{eq:mle-finite}, \Cref{prop:Sobolev-P-upper-bound} yields the bound
  \begin{equation} \label{eq:flat-limit-P-Sobolev-bound}
    P_\lambda(x)^2 = P_{X_\lambda}( \lambda^{-1} x )^2 \leq C \lambda^{-\beta(\alpha)}
  \end{equation}
  for $\beta = \min\{1, \alpha - d/2\} > 0$ and a positive constant $C$ which does not depend on $\lambda$.
  The claim $\lim_{\lambda \to \infty} \mu_\lambda(x) = m(x) + c$ is proved by inserting this bound in~\eqref{eq:error-estimate-for-flat-limit}.
  The claim for the conditional covariance follows from~\eqref{eq:flat-limit-P-Sobolev-bound} and the Cauchy--Schwarz covariance inequality $P_\lambda(x, y)^2 \leq P_\lambda(x) P_\lambda(y)$.
\end{proof}

\begin{remark} \label{remark:flat-limit-generalisation}
  Note in the above proof it is sufficient that $P_\lambda(x)$ tends to zero as $\lambda \to \infty$. Unlike in the proof of \Cref{eq:mle-finite}, the rate with which this convergence occurs is of no importance. From the proof of \Cref{prop:Sobolev-P-upper-bound} it is easy to see that a sufficient condition for $\lim_{ \lambda \to \infty} P_\lambda(x) = 0$ is that $K(x, y) = \Phi(x - y)$ for a continuous $\Phi \colon \R^d \to \R$.
  Therefore \Cref{thm:flat-limit} holds if $K$ is a continuous stationary kernel such that constant functions are contained in $H(K, B)$ for some open ball $B$ centered at the origin.
  This generalisation can be extended also to \Cref{eq:mle-infinite} and \Cref{thm:cross-validation}.
  Further generalisations are discussed in \Cref{sec:generalisation}.
\end{remark}

\subsection{Proofs for \Cref{sec:what-does-not-help}} \label{sec:proofs-what-does-not-help}

Recall the notational conventions reviewed in \Cref{sec:on-notation}.
In addition, denote
\begin{equation*}
  X_{\lambda,n,k} = \lambda^{-1} X_{n,k} = \{ \lambda^{-1} x_i \}_{i \neq k}.
\end{equation*}

\crossvalidation*
\begin{proof}
  Since the data are $m$-constant, there is a constant function $f$ such that $Y_m = Y - m(X) = f(X)$.
  Recall from \Cref{sec:sobolev-spaces} that \Cref{assumption:sobolev-kernel} implies that $f$ is an element of $H(K, B)$ for an open ball $B$ that can be selected such that $X \subset B$.
  Observe that $\mu_{\lambda,n,k}(x) = m(x) + s_{\lambda,n,k}(x)$.
  \Cref{prop:Sobolev-P-upper-bound} and the RKHS error estimate~\eqref{eq:RKHS-error-estimate} yield, for any $\lambda \geq 1$ and a constant $C_1$ which does not depend on $\lambda$,
  \begin{equation*}
    \begin{split}
      \ell_\CV( \lambda \mid Y ) &= \sum_{k = 1}^n \bigg[ \bigg( \frac{ y_k - \mu_{\lambda,n,k}(x_k) }{P_{\lambda,n,k}(x_k)} \bigg)^2 + \log [ P_{\lambda,n,k}(x_k)^2 ] \bigg] \\
      &= \sum_{k = 1}^n \bigg[ \bigg( \frac{ f(\lambda^{-1} x_k) - s_{f,X_{\lambda,n,k}}(\lambda^{-1} x_k) }{P_{X_{\lambda,n,k}}(\lambda^{-1} x_k)} \bigg)^2 + \log [ P_{X_{\lambda,n,k}}(\lambda^{-1} x_k)^2 ] \bigg] \\
      &\leq n \norm[0]{f}_{H(K,B)}^2 - \beta(\alpha) n \log \lambda + C_1,
      \end{split}
  \end{equation*}
  which establishes that
  \begin{equation} \label{eq:proof-ell-cv-inf-limit}
    \lim_{ \lambda \to \infty} \ell_\CV( \lambda \mid Y) = -\infty
  \end{equation}
  since $\beta(\alpha) = \min\{1, \alpha - d/2\}> 0$.
  Because $K_\lambda(x, y) \to 0$ as $\lambda \to 0$ if $x \neq y$, it is easy to compute from~\eqref{eq:conditional-mean} and~\eqref{eq:conditional-variance} that
  \begin{equation*}
    \lim_{\lambda \to 0 } s_{\lambda,n,k}(x_k) = 0 \quad \text{ and} \quad \lim_{ \lambda \to 0 } P_{\lambda,n,k}(x_k) = \sqrt{\Phi(0)} > 0
  \end{equation*}
  for every $k$.
  Therefore
  \begin{equation*}
    \lim_{ \lambda \to 0} \ell_\CV( \lambda \mid Y ) =  c n \Phi(0)^{-1} + n \log \Phi(0) > -\infty,
  \end{equation*}
  so that it follows from Equation~\eqref{eq:proof-ell-cv-inf-limit} and the fact that $\ell_\CV(\lambda \mid Y)$ is continuous that \sloppy{${\lambda_\CV = \argmin_{ \lambda > 0} \ell_\CV(\lambda \mid Y) = \infty}$}.
\end{proof}

\begin{remark} \label{remark:cross-validation-proof-issue}
  Proving that $\lim_{ \lambda \to \infty} \ell_{\CV}(\lambda \mid Y) = \infty$ and $\lambda_{\CV} < \infty$ if the data are not $m$-constant is more challenging than the same for the maximum likelihood estimate.
  This is because in the proof of \Cref{eq:mle-finite} one could exploit a recursive form of the modified log-likelihood function that follows from \Cref{prop:ell-representation} and obtain a closed-form expression for the term in which the conditional mean and variance are based on one data point.
  If $n > 2$, it is more difficult to analyse any of the analogous terms in $\ell_\CV( \lambda \mid Y )$ because each of these terms is built out of conditional moments based on $n-1$ points.
\end{remark}

\parametrictheorem*
\begin{proof}
  The modified log-likelihood function is
  \begin{equation*}
    \ell( \lambda, \beta \mid Y ) = (Y - V(X) \beta)^\T K_\lambda(X, X)^{-1} (Y - V(X) \beta) + \log \det K_\lambda(X, X).
  \end{equation*}
  By \Cref{thm:main-theorem} and identification of $Y - V(X) \beta$ as the vector $Y_m$, $\ell( \lambda, \beta \mid Y)$ tends to negative infinity for a fixed $\beta \in \R^q$ if and only if $Y - V(X) \beta = (c, \ldots, c) \in \R^n$ for some $c \in \R$ and  $\lambda \to \infty$.
  Furthermore, because the data-fit term is non-negative and model complexity does not depend on $\beta$, $\ell( \lambda, \beta \mid Y)$ cannot tend to negative infinity if $\lambda$ is kept fixed.
  We conclude that $\lambda_\ML = \infty$ because by the assumption in~\eqref{eq:constant-data-parametric} there is $\beta^* \in \R^q$ for which $Y - V(X) \beta^* = (c, \ldots, c)$.
\end{proof}

\simutheorem*
\begin{proof}
  Recall that $\sigma_\ML$ and $\lambda_\ML$ are any minimisers of
  \begin{equation} \label{eq:ell-sigma-lambda-proofs}
    \ell( \lambda, \sigma \mid Y) = \frac{1}{\sigma^2} Y_m^\T K_\lambda(X, X)^{-1} Y_m + \log \det K_\lambda(X, X) + n \log \sigma^2.
  \end{equation}
  For a fixed $\lambda$ it is straightforward to compute that the maximum likelihood estimate of~$\sigma$ is
  \begin{equation*}
    \sigma_\ML(\lambda) = \sqrt{ \frac{Y_m^\T K_\lambda(X, X)^{-1} Y_m }{n} }.
  \end{equation*}
  Plugging this estimate in~\eqref{eq:ell-sigma-lambda-proofs} yields that $\lambda_\ML$ is any minimiser of
  \begin{equation*}
    \ell( \lambda, \sigma_\ML(\lambda) \mid Y ) = n + \log \det K_\lambda(X, X) + n \log ( Y_m^\T K_\lambda(X, X)^{-1} Y_m ) - n \log n.
  \end{equation*}
  The term $n \log ( Y_m^\T K_\lambda(X, X)^{-1} Y_m )$ is bounded as a function of $\lambda$ by \Cref{lemma:data-fit-bounded} and because we have assumed that $Y_m \neq 0$.
  A straightforward adaptation of the proof of~\eqref{eq:mle-infinite} then establishes the claim.
\end{proof}

\subsection{Proof of \Cref{thm:map-estimation}} \label{sec:proof-map-estimation}

\maptheorem*
\begin{proof}
  Recall that the MAP estimator is any minimiser of
  \begin{equation*}
    \ell_\MAP( \lambda \mid Y ) = \frac{1}{2} \ell( \lambda \mid Y ) - \log p(\lambda),
  \end{equation*}
  where $\ell(\lambda \mid Y)$ is the modified log-likelihood function in~\eqref{eq:ell-function}.
  If the data are not $m$-constant, it follows from \Cref{thm:main-theorem} that $\ell( \lambda \mid Y) \to \infty$ as $\lambda \to \infty$.
  Since $p$ is a probability density function, it holds that $-\log p(\lambda) \to \infty$ as $\lambda \to \infty$.
  Consequently,
  \begin{equation} \label{eq:map-estimation-claim-m-constant}
    \lim_{\lambda \to \infty} \ell_\MAP( \lambda \mid Y ) = \infty \quad \text{ and } \quad \lambda_\MAP < \infty
  \end{equation}
  if the data are not $m$-constant.
  Suppose then that the data are $m$-constant.
  The non-negativity of the data-fit term and \Cref{lemma:model-complexity-rate} yield
  \begin{equation*}
    \ell_\MAP( \lambda \mid Y ) \geq -(\alpha - d/2) n \log \lambda + C - \log p(\lambda) = - \log[ \lambda^{(\alpha-d/2)n} p(\lambda)] + C
  \end{equation*}
  for a constant $C$ that does not depend on $\lambda$.
  That $- \log[ \lambda^{(\alpha-d/2)n} p(\lambda)] \to \infty$ as $\lambda \to \infty$ is true by assumption~\eqref{eq:hyperprior-tail-assumption1}.
  Therefore~\eqref{eq:map-estimation-claim-m-constant} holds also when the data are $m$-constant.
\end{proof}

\subsection{Proof of \Cref{thm:generalisation}} \label{sec:proof-of-generalisation}

We begin with a technical lemma which is used to establish some properties of the information functionals.

\begin{lemma} \label{lemma:bounded-lin-funcs} Let $\Omega$ be a vector space and $K$ and $K_\gamma$ two positive-semidefinite kernels on $\Omega$ such that
  \begin{equation*}
    K_\gamma(x,y) = K( \gamma x, \gamma y) \quad \text{ for some } \quad \gamma > 0 \quad \text{ and all } \quad x, y \in \Omega.
  \end{equation*}
  Let $f_\gamma$ be the function $x \mapsto f(\gamma x)$ for any $f \colon \Omega \to \R$ and, given a linear functional $I$, define $I_\gamma$ via $I_\gamma f = I f_\gamma$.
  Then it holds that
  \begin{enumerate}
  \item if $f \in H(K, \Omega)$, then $f_\gamma \in H(K_\gamma, \Omega)$;
  \item if $I$ is a bounded linear functional on $H(K_\gamma, \Omega)$, then $I_\gamma$ is bounded on $H(K, \Omega)$;
  \item if linear functionals $I_1, \ldots, I_n$ are linearly independent on $H(K_\gamma, \Omega)$, then $I_{1,\gamma}, \ldots, I_{n,\gamma}$ are linearly independent on $H(K, \Omega)$.
  \end{enumerate}
\end{lemma}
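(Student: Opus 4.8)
The plan is to reduce all three assertions to a single structural fact: the dilation operator $T_\gamma$, which sends a function $f \colon \Omega \to \R$ to its rescaling $f_\gamma = f(\gamma \, \cdot)$, restricts to an isometric isomorphism from $H(K, \Omega)$ onto $H(K_\gamma, \Omega)$. Once this is established, each of the three claims follows by a short argument. Note that strict positive-definiteness is never needed, so the argument applies verbatim to the positive-semidefinite kernels of the lemma.

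First I would verify the isometry on kernel translates. For any $y \in \Omega$ the pointwise identity $T_\gamma[K(\cdot, y)](x) = K(\gamma x, y) = K_\gamma(x, y/\gamma)$ shows that $T_\gamma$ carries $K(\cdot, y)$ to the translate $K_\gamma(\cdot, y/\gamma)$; as $y$ ranges over $\Omega$, so does $y/\gamma$ (because $\gamma > 0$ and $\Omega$ is a vector space), so $T_\gamma$ maps the translates of $K$ bijectively onto those of $K_\gamma$. The reproducing property then gives $\inprod{K_\gamma(\cdot, y_1/\gamma)}{K_\gamma(\cdot, y_2/\gamma)}_{H(K_\gamma, \Omega)} = K_\gamma(y_1/\gamma, y_2/\gamma) = K(y_1, y_2) = \inprod{K(\cdot, y_1)}{K(\cdot, y_2)}_{H(K, \Omega)}$, so $T_\gamma$ preserves inner products on finite linear combinations of translates. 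Moreover, for such a combination $f = \sum_i a_i K(\cdot, y_i)$ the abstract image $\sum_i a_i K_\gamma(\cdot, y_i/\gamma)$ agrees pointwise with $f_\gamma$, so the abstract isometry coincides with the concrete dilation on the dense subspace of translates.

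Next I would extend $T_\gamma$ to the whole space by density and identify its abstract image with the pointwise dilation; this is the step requiring the most care, and it is where I expect the main obstacle to lie. Since finite combinations of translates are dense in $H(K, \Omega)$ and $T_\gamma$ preserves inner products there, it extends uniquely to a linear isometry into $H(K_\gamma, \Omega)$; surjectivity follows because the range contains the dense span of the $K_\gamma$-translates and the range of an isometry on a complete space is closed. For arbitrary $f \in H(K, \Omega)$ I would take translate combinations $f_k \to f$ in $H(K, \Omega)$: then $T_\gamma f_k \to T_\gamma f$ in $H(K_\gamma, \Omega)$, while continuity of point evaluation gives $(T_\gamma f_k)(x) = f_k(\gamma x) \to f(\gamma x) = f_\gamma(x)$ pointwise, so the two limits must agree and $T_\gamma f = f_\gamma$ as a function. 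This already yields claim (1), namely $f_\gamma = T_\gamma f \in H(K_\gamma, \Omega)$ with $\norm[0]{f_\gamma}_{H(K_\gamma, \Omega)} = \norm[0]{f}_{H(K, \Omega)}$.

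Finally the remaining claims are immediate from the isometry. For (2), if $I$ is bounded on $H(K_\gamma, \Omega)$ with operator norm $\norm[0]{I}$, then for every $f \in H(K, \Omega)$ we have $I_\gamma f = I f_\gamma = I(T_\gamma f)$, whence $\abs[0]{I_\gamma f} \leq \norm[0]{I} \, \norm[0]{T_\gamma f}_{H(K_\gamma, \Omega)} = \norm[0]{I} \, \norm[0]{f}_{H(K, \Omega)}$, so $I_\gamma$ is bounded on $H(K, \Omega)$ with the same operator norm. For (3), suppose $\sum_{i} c_i I_{i,\gamma} = 0$ on $H(K, \Omega)$, that is, $\sum_i c_i I_i(T_\gamma f) = 0$ for all $f \in H(K, \Omega)$; since $T_\gamma$ maps onto $H(K_\gamma, \Omega)$, this forces $\sum_i c_i I_i = 0$ on $H(K_\gamma, \Omega)$, and linear independence there gives $c_1 = \cdots = c_n = 0$. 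Everything outside the density-and-identification step of the third paragraph is routine once $T_\gamma$ is known to be an isometric isomorphism.
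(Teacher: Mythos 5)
Your proof is correct, but it takes a genuinely different route from the paper's. The paper invokes a pointwise characterisation of RKHS membership \citep[Theorem~3.11]{Paulsen2016}: $f \in H(K, \Omega)$ if and only if $c^2 K(x,y) - f(x)f(y)$ is a positive-semidefinite kernel for some $c \geq 0$, with $\norm[0]{f}_{H(K,\Omega)}$ equal to the smallest such $c$; substituting the scaled arguments then immediately gives $f_\gamma \in H(K_\gamma, \Omega)$ with $\norm[0]{f_\gamma}_{H(K_\gamma,\Omega)} = \norm[0]{f}_{H(K,\Omega)}$, and claims (2) and (3) follow as in your final paragraph. You instead build the dilation operator $T_\gamma$ from the Moore--Aronszajn construction: check that it carries $K(\cdot,y)$ to $K_\gamma(\cdot, y/\gamma)$ isometrically on the dense span of translates, extend by continuity, and identify the abstract limit with the pointwise dilation via continuity of evaluation functionals. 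Both arguments establish the same underlying fact---that dilation is a norm-preserving bijection of $H(K,\Omega)$ onto $H(K_\gamma,\Omega)$---and both correctly use only positive-semidefiniteness. The paper's criterion buys brevity, sidestepping the density and limit-identification steps entirely; your construction is more classical and self-contained, and by making surjectivity of $T_\gamma$ explicit it renders the linear-independence claim (3) cleaner than the paper's rather terse version, which leaves the needed bijectivity (obtained by applying the first claim with $\gamma$ replaced by $1/\gamma$) implicit. The one step you rightly flag as delicate---identifying the abstract isometric extension with the concrete function $x \mapsto f(\gamma x)$---is handled correctly by comparing norm-convergence with pointwise convergence.
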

\begin{proof} By a classical result~\citep[e.g.,][Theorem~3.11]{Paulsen2016} a function $f$ is an element of $H(K, \Omega)$ if and only if there is $c \geq 0$ such that $c^2 K(x, y) - f(x) f(y)$ defines a positive-semidefinite kernel.
  If $f \in H(K, \Omega)$, then $\norm[0]{f}_{H(K, \Omega)}$ equals the smallest $c$ for which this kernel is positive-semidefinite.
  Let $f \in H(K, \Omega)$.
  Therefore $c^2 K(x, y) - f(x) f(y)$ defines a positive-semidefinite kernel for some $c \geq 0$ and
  \begin{equation} \label{eq:c2-f-kernel}
    c^2 K( \gamma x, \gamma y) - f(\gamma x) f(\gamma y) = c^2 K_\gamma(x,y) - f_\gamma(x) f_\gamma(y)
  \end{equation}
  also defines a positive-semidefinite kernel. Consequently, $f_\gamma \in \mathcal{H}(K_\gamma, \Omega)$.
  Furthermore, because $c = \norm[0]{f}_{H(K, \Omega)}$ is the smallest $c$ for which the kernels in~\eqref{eq:c2-f-kernel} are positive-semidefinite, we conclude that $\norm[0]{f_\gamma}_{H(K_\gamma, \Omega)} = \norm[0]{f}_{H(K, \Omega)}$.
  Because $I$ is bounded on $H(K_\gamma, \Omega)$ and $f_\gamma$ is an element of $H(K_\gamma, \Omega)$, there is a constant $C > 0$, which does not depend on $f$, such that
  \begin{equation*}
    \abs[0]{I_\gamma f} = \abs[0]{I f_\gamma} \leq C \norm[0]{f_\gamma}_{H(K_\gamma, \Omega)} = C \norm[0]{f}_{H(K, \Omega)}.
  \end{equation*}
  This concludes the proof of the first two claims.
  The third claim also follows because we have proved that $f_\gamma \in H(K_\gamma, \Omega)$ if $f \in H(K, \Omega)$ and it holds that
  \begin{equation*}
    a_1 I_{1,\gamma}(f) + \cdots + a_n I_{n,\gamma}(f) = a_1 I_{1}(f_\gamma) + \cdots + a_m I_{n}(f_\gamma)
  \end{equation*}
  for any $a_1, \ldots, a_n \in \R$.
\end{proof}

The following proposition is a generalisation of the minimum-norm property for interpolation based on point evaluations that was reviewed in \Cref{sec:rkhs}.
We have not been able to locate a convenient reference and therefore provide a proof.

\begin{proposition} \label{prop:interpolant-norm-bound}
  Let $K$ be a positive-definite kernel on a set $\Omega$. 
  If $f \in H(K, \Omega)$, then
  \begin{equation} \label{eq:rkhs-norm-condition}
    \mathcal{I}(f)^\T K(\mathcal{I}, \mathcal{I})^{-1} \mathcal{I}(f) \leq \norm[0]{f}_{H(K, \Omega)}^2
  \end{equation}
for every $n \geq 1$ and every collection $\mathcal{I} = \{I_1, \ldots, I_n\}$ of $n$ non-trivial linear functionals which are linearly independent and bounded on $H(K, \Omega)$.
\end{proposition}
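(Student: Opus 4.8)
The plan is to recognise the left-hand side of~\eqref{eq:rkhs-norm-condition} as the squared RKHS norm of the orthogonal projection of $f$ onto a finite-dimensional subspace spanned by the representers of the functionals, and then to invoke the fact that an orthogonal projection cannot increase the norm.

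First I would use the assumed boundedness of each $I_i$ on $H(K, \Omega)$ together with the Riesz representation theorem to produce representers $\xi_i \in H(K, \Omega)$ satisfying $I_i(g) = \inprod{\xi_i}{g}_{H(K,\Omega)}$ for every $g \in H(K, \Omega)$. Applying this identity to a kernel translate $g = K(\cdot, y)$ and using the reproducing property $\inprod{\xi_i}{K(\cdot, y)}_{H(K,\Omega)} = \xi_i(y)$ would identify the representer explicitly as $\xi_i = I_i^x K(x, \cdot)$, the function obtained by applying $I_i$ to one argument of the kernel, i.e. the $i$th component of $K(\mathcal{I}, \cdot)$ in the notation of \Cref{sec:linear-information}. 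This identification is the step I expect to require the most care, as it rests on the density of the span of kernel translates in $H(K, \Omega)$ and the consequent legitimacy of evaluating the functional against the kernel.

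Next I would compute the two Gram-type quantities appearing in~\eqref{eq:rkhs-norm-condition}. From the representer identity, $\inprod{\xi_i}{\xi_j}_{H(K,\Omega)} = I_i(\xi_j) = I_i^x I_j^y K(x, y)$, so the matrix of pairwise inner products of the representers is precisely $K(\mathcal{I}, \mathcal{I})$; similarly $\inprod{\xi_i}{f}_{H(K,\Omega)} = I_i(f)$, so the vector of inner products against $f$ is $\mathcal{I}(f)$. The assumed linear independence of $I_1, \ldots, I_n$ on $H(K, \Omega)$ transfers to linear independence of $\xi_1, \ldots, \xi_n$, so that $K(\mathcal{I}, \mathcal{I})$ is a nonsingular Gram matrix and $V = \operatorname{span}\{\xi_1, \ldots, \xi_n\}$ is an $n$-dimensional subspace.

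Finally, writing the orthogonal projection of $f$ onto $V$ as $P_V f = \sum_{j=1}^n c_j \xi_j$, the normal equations $\inprod{\xi_i}{f - P_V f}_{H(K,\Omega)} = 0$ read $K(\mathcal{I}, \mathcal{I}) c = \mathcal{I}(f)$, so $c = K(\mathcal{I}, \mathcal{I})^{-1} \mathcal{I}(f)$ and $\norm[0]{P_V f}_{H(K,\Omega)}^2 = c^\T K(\mathcal{I}, \mathcal{I}) c = \mathcal{I}(f)^\T K(\mathcal{I}, \mathcal{I})^{-1} \mathcal{I}(f)$. The Pythagorean identity $\norm[0]{f}_{H(K,\Omega)}^2 = \norm[0]{P_V f}_{H(K,\Omega)}^2 + \norm[0]{f - P_V f}_{H(K,\Omega)}^2 \geq \norm[0]{P_V f}_{H(K,\Omega)}^2$ then yields~\eqref{eq:rkhs-norm-condition} at once. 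The only genuine obstacle is the representer computation in the second step; everything after it is finite-dimensional linear algebra.
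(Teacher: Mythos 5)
Your proof is correct and is in substance the same as the paper's: the orthogonal projection $P_V f$ onto the span of the Riesz representers is exactly the minimum-norm interpolant $s_{\mathcal{I}}f$ that the paper works with, and the identification of $\mathcal{I}(f)^\T K(\mathcal{I}, \mathcal{I})^{-1} \mathcal{I}(f)$ as the squared $H(K,\Omega)$-norm of that element is the same computation in both arguments. The only cosmetic differences are that you derive the explicit form of this element from the Riesz representation theorem and the normal equations where the paper cites it from \citet{Wendland2005}, and that you conclude via the Pythagorean identity where the paper invokes feasibility of $f$ in the minimum-norm problem---two equivalent formulations of the projection theorem.
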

\begin{proof} Let $s_{\mathcal{I}}f$ be the minimum-norm interpolant to $f$ in $H(K, \Omega)$, which is to say that
  \begin{equation*}
    s_{\mathcal{I}}f = \argmin_{s \in H(K, \Omega)}\Set[\big]{ \norm[0]{s}_{H(K, \Omega)}}{ Is = If \text{ for all } I \in \mathcal{I}}.
  \end{equation*}
  By, for example, Theorem~16.1 in \citet{Wendland2005} or Section~3.3 in \citet{Oettershagen2017} this interpolant is unique and has the explicit form
  \begin{equation*}
    (s_{\mathcal{I}}f)(x) = I(f)^\T K(\mathcal{I}, \mathcal{I})^{-1} K(\mathcal{I}, x) = \sum_{i=1}^n I_i^y (K(x,y)) ( K(\mathcal{I}, \mathcal{I})^{-1} \mathcal{I}(f) )_{i}.
  \end{equation*}
  Recall that we use superscripts to indicate the argument with respect to which a functional is to be applied.
  Because $\inprod{I_j^y K(\cdot, y)}{I_i^x K(x,\cdot)}_{H(K, \Omega)} = I_i^x I_j^y K(x,y)$, from the reproducing property we compute that
  \begin{equation*}
    \begin{split}
      \norm[0]{s_\mathcal{I}f}_{H(K, \Omega)}^2 &= \inprod{ s_\mathcal{I}f }{s_\mathcal{I}f}_{H(K, \Omega)} \\
      &= \sum_{i,j=1}^m \inprod{I_j^y K(\cdot,y)}{I_i^x K(x,\cdot)}_{H(K, \Omega)} ( K(\mathcal{I}, \mathcal{I})^{-1} \mathcal{I}(f) )_{i} ( K(\mathcal{I}, \mathcal{I})^{-1} \mathcal{I}(f) )_{j} \\
      &= \sum_{i,j=1}^n I_i^x I_j^y K(x,y) ( K(\mathcal{I}, \mathcal{I})^{-1} \mathcal{I}(f) )_{i} ( K(\mathcal{I}, \mathcal{I})^{-1} \mathcal{I}(f) )_{j} \\
      &= \mathcal{I}(f)^\T  K(\mathcal{I}, \mathcal{I})^{-1} K(\mathcal{I}, \mathcal{I}) K(\mathcal{I}, \mathcal{I})^{-1} \mathcal{I}(f) \\
      &= \mathcal{I}(f)^\T K(\mathcal{I}, \mathcal{I})^{-1} \mathcal{I}(f).
      \end{split}
  \end{equation*}
  If $f \in H(K, \Omega)$, the condition $Is = If$ for all $I \in \mathcal{I}$ is trivially satisfied by $s = f$ and therefore $\norm[0]{s_\mathcal{I}f}_{H(K, \Omega)}^2 = \mathcal{I}(f)^\T K(\mathcal{I}, \mathcal{I})^{-1} \mathcal{I}(f) \leq \norm[0]{f}_{H(K, \Omega)}^2$.
\end{proof}

These preliminaries suffice to prove Theorem~\ref{thm:generalisation}.\\

\gentheorem*
\begin{proof}
  Let $L_{i,\lambda}$ be linear functionals that are defined as $L_{i,\lambda} f = L_if(\cdot/g(\lambda))$ for $f \in H(K, \Omega)$.
  These functionals are well-defined because $f(\cdot/g(\lambda)) \in H(K_{1/\lambda}, \Omega) \subset F(\Omega)$ by \Cref{lemma:bounded-lin-funcs}.
  Moreover, by \Cref{lemma:bounded-lin-funcs} and the assumption that there exists $\lambda_1 > 0$ such that $g(\lambda_1) = 1$, the linear functionals $\mathcal{L}_\lambda = \{L_{1,\lambda}, \ldots, L_{n,\lambda} \}$ are linearly independent and bounded on $H(K, \Omega) = H(K_{\lambda_1}, \Omega)$ for every $\lambda > 0$.
  Because $g$ is continuous and $g(\lambda) \to \infty$ as $\lambda \to \infty$, there is $\lambda_0 > 0$ such that $g(\lambda) \geq 1$ for all $\lambda \geq \lambda_0$.
  The assumptions on $\Omega_b$ imply that $\Omega_b \subset \Omega_{b,\lambda} = \Set{g(\lambda) x}{ x \in \Omega_b}$ if $\lambda \geq \lambda_0$.
  Let $f_1, f_2 \in H(K, \Omega)$ be such that $f_1 = f_2$ on $\Omega_b$.
  Then $f_1(\cdot/g(\lambda)) = f_2(\cdot/g(\lambda))$ on $\Omega_b \subset \Omega_{b,\lambda}$ and thus 
  \begin{equation*}
    L_{i,\lambda} f_1 = L_{i} \bigg[ f_1\bigg(\frac{\cdot}{g(\lambda)} \bigg) \bigg] = L_{i} \bigg[ f_2\bigg(\frac{\cdot}{g(\lambda)} \bigg) \bigg] = L_{i,\lambda} f_2
  \end{equation*}
  by the assumption that $L_if_1 = L_if_2$ for every $i \in \{1, \ldots, n\}$ and all $f_1, f_2 \in F(\Omega)$ such that $f_1 = f_2$ on $\Omega_b$.
  Therefore the functionals $\mathcal{L}_\lambda$ admit well-defined, bounded, and linearly independent restrictions on $H(K, \Omega_b)$ for every $\lambda \geq \lambda_0$.
  Let $f \equiv c$ for $c \in \R$ be the constant function such that $Y_{\mathcal{L}, m} = \mathcal{L}(f)$.
  Then
  \begin{equation*}
    Y_{\mathcal{L},m}^\T K_\lambda( \mathcal{L}, \mathcal{L} )^{-1} Y_{\mathcal{L}, m} = \mathcal{L}(f)^\T K_\lambda( \mathcal{L}, \mathcal{L} )^{-1} \mathcal{L}(f) = \mathcal{L}_\lambda(f)^\T K( \mathcal{L}_\lambda, \mathcal{L}_\lambda )^{-1} \mathcal{L}_\lambda(f)
  \end{equation*}
  since $f(\cdot / g(\lambda)) = f$ for constant functions. The assumption that $f \in H(K, \Omega_b)$ and \Cref{prop:interpolant-norm-bound} then imply that
  \begin{equation} \label{eq:term1-bound1}
    \sup_{ \lambda \geq \lambda_0} Y_{\mathcal{L},m}^\T K_\lambda( \mathcal{L}, \mathcal{L} )^{-1} Y_{\mathcal{L}, m} \leq \norm[0]{f}_{H(K, \Omega_b)}^2.
  \end{equation}
  By \Cref{ass:general}, the data-fit term $Y_{\mathcal{L},m}^\T K_\lambda( \mathcal{L}, \mathcal{L} )^{-1} Y_{\mathcal{L}, m}$ is continuous in $\lambda$ (which can be proved similarly to \Cref{lemma:continuity}) and satisfies
  \begin{equation*}
    \limsup_{\lambda \to 0} Y_{\mathcal{L},m}^\T K_\lambda( \mathcal{L}, \mathcal{L} )^{-1} Y_{\mathcal{L}, m} \leq \frac{1}{\rho} \norm[0]{ Y_{\mathcal{L}, m}}^2 < \infty,
  \end{equation*}
  where $\rho = \liminf_{ \lambda \to 0} e_\textup{min} (K_\lambda( \mathcal{L}, \mathcal{L} )) > 0$.
  From this and~\eqref{eq:term1-bound1} it follows that there is $a \geq 0$ such that 
  \begin{equation} \label{eq:term1-bound}
    0 \leq Y_{\mathcal{L},m}^\T K_\lambda( \mathcal{L}, \mathcal{L} )^{-1} Y_{\mathcal{L}, m} \leq a
  \end{equation}
  for all $\lambda > 0$, which is a generalisation of \Cref{lemma:data-fit-bounded}.
  Finally, Assumption~\ref{ass:general} implies that $\det K_\lambda( \mathcal{L}, \mathcal{L} ) \to 0$ if and only if $\lambda \to \infty$ since $\det K_\lambda( \mathcal{L}, \mathcal{L} )$ is a continuous function of $\lambda$ and positive for every $\lambda > 0$ by positive-definiteness of $K_\lambda(\mathcal{L}, \mathcal{L}) = K(\mathcal{L}_\lambda, \mathcal{L}_\lambda)$.
  We thus conclude from~\eqref{eq:term1-bound} that
  \begin{equation*}
    \ell( \lambda \mid Y_\mathcal{L} ) = Y_{\mathcal{L},m}^\T K_\lambda( \mathcal{L}, \mathcal{L} )^{-1} Y_{\mathcal{L}, m} + \log \det K_\lambda( \mathcal{L}, \mathcal{L} )
  \end{equation*}
  is a sum two $\lambda$-continuous terms, the first of which is bounded while the second tends to negative infinity if and only if $\lambda \to \infty$.
  Therefore $\lambda_{\ML(\mathcal{L})} = \argmin_{\lambda > 0} \ell( \lambda \mid Y_\mathcal{L} ) = \infty$.
\end{proof}

\subsection{Proof of \Cref{thm:multiple-lengthscales}} \label{sec:multiple-lambda-proof}

\multiplelengthscales*
\begin{proof}
  Because both the kernel $K_\theta$ and the covariates $X$ have product forms, we may write the covariance matrix as
  \begin{equation*}
    K_\theta(X, X) = K_{1,\lambda_1}(X_1, X_1) \otimes \cdots \otimes K_{d,\lambda_d}(X_d, X_d),
  \end{equation*}
  where $\otimes$ denotes the Kronecker product.
  We may assume that $p = 1$ without loss of generality.
  This means that we may write
  \begin{equation*}
    Y_m = Y - m(X) = 1_{n_1} \otimes Y_m',
  \end{equation*}
  where $1_{n_1} = (1, \ldots, 1) \in \R^{n_1}$ and $Y_m' \in \R^{n'}$ for $n' = n_2 \times \cdots \times n_d$ is a certain vector.
  Let $A = K_{2, \lambda_2}(X_2, X_2) \otimes \cdots \otimes K_{d,\lambda_d}(X_d, X_d) \in \R^{n' \times n'}$.
  The properties of the Kronecker product yield
  \begin{equation*}
    Y_m^\T K_\theta(X, X)^{-1} Y_m = \big[ 1_{n_1}^\T K_{1,\lambda_1}(X_1, X_1)^{-1} 1_{n_1} \big] \times \big[ (Y_m')^\T A^{-1} Y_m' \big]
  \end{equation*}
  and
  \begin{equation*}
    \det K_\theta(X, X) = \big[\det K_{1,\lambda_1}(X_1, X_1) \big]^{n'} \times [ \det A ]^{n_1}.
  \end{equation*}
  Because neither $Y_m'$ nor $A$ depends on $\lambda_1$, we may proceed as in the proof of \Cref{thm:main-theorem} to show that $1_{n_1}^\T K_{1,\lambda_1}(X_1, X_1)^{-1} 1_{n_1}$ is a bounded function of $\lambda_1$ while $\log \det K_{1,\lambda_1}(X_1, X_1)$ tends to negative infinity as $\lambda_1 \to \infty$.  
\end{proof}

\subsection{Proof of \Cref{thm:smooth-kernels}} \label{sec:smooth-proof}

Let $d = 1$ and $K(x, y) = \Phi(x -y)$ for an integrable function $\Phi \colon \R \to \R$ that is infinitely differentiable in a neighbourhood of the origin.
Define the diagonal matrix
\begin{equation*}
  \Delta_\lambda = \mathrm{diag}(1, \lambda^{-1}, \ldots, \lambda^{n-1})
\end{equation*}
and the \emph{Vandermonde matrix}
\begin{equation*}
  V(X) =
  \begin{pmatrix}
    1 & x_1 & \cdots & x_1^{n-1} \\
    \vdots & \vdots & \ddots & \vdots \\
    1 & x_n & \cdots & x_n^{n-1}
  \end{pmatrix}.
\end{equation*}
It is a standard result in polynomial interpolation that the Vandermonde matrix is non-singular when $x_i$ are distinct.
Recall that $W$ is the Wronskian defined in~\eqref{eq:wronskian}.
The Wronskian is non-singular if the Fourier transform of $\Phi$ is positive~\citep[Lemma~3.3]{LeeYoonYoon2007}.
To see this, note first that
\begin{equation*}
    (-1)^{j} (-\mathrm{i})^{i+j} \frac{\partial^{i+j}}{\partial v^i \partial w^j} K(v, w) \Bigl|_{\substack{v = 0 \\ w = 0}} = (-\mathrm{i})^{i+j} \Phi^{(i+j)}(0) = \frac{1}{2\pi} \int_\R \xi^{i+j} \widehat{\Phi}(\xi) \dif \xi
\end{equation*}
and observe then that
\begin{equation*}
  \sum_{i=0}^{n-1} \sum_{j=0}^{n-1} a_i a_j \Phi^{(i+j)}(0) = \frac{1}{2\pi} \int_\R \widehat{\Phi}(\xi) \Bigg( \sum_{i=1}^n a_i \xi^i \Bigg)^2 \dif \xi
\end{equation*}
is positive for any $n \in \N$ and any non-zero $a = (a_0, \ldots, a_{n-1}) \in \R^n$.
It follows from these equations that the Wronskian is non-singular because it can be written as a product of a positive-definite matrix with elements $\Phi^{(i+j)}(0)$ and two non-singular diagonal matrices.\\

\smooththeorem*
\begin{proof}
We proceed as in the proof of \Cref{eq:mle-infinite} in \Cref{sec:proof-main} and conclude that to prove the claim it is sufficient to show that the limit $\lim_{\lambda \to \infty} Y_m^\T K_\lambda(X, X)^{-1} Y_m$ exists and is finite.
To prove that this is so we use Equation~(32) in \citet{BarthelmeUsevich2021}, which states that
\begin{equation*}
  K_\lambda(X, X) = V(X) \Delta_\lambda W \Delta_\lambda V(X)^\T + \lambda^{-n} \big[ V(X) \Delta_\lambda W_{1,\lambda} + W_{2,\lambda} \Delta_\lambda V(X)^\T \big] + \lambda^{-2n} W_{3,\lambda},
\end{equation*}
where the matrices $W_{1,\lambda}$, $W_{2,\lambda}$, and $W_{3,\lambda}$ are bounded as $\lambda \to \infty$.
We may now use this equation to write
\begin{equation} \label{eq:data-fit-barthelme}
  Y_m^\T K_\lambda(X, X)^{-1} Y_m = Y_m^\T V(X)^{-\T} \Delta_\lambda^{-1} ( W + A_\lambda)^{-1} \Delta_\lambda^{-1} V(X)^{-1} Y_m,
\end{equation}
where
\begin{equation*}
    A_\lambda = \lambda^{-n} \big[ W_{1,\lambda} V(X)^{-\T} \Delta_\lambda^{-1} + \Delta_\lambda^{-1} V(X)^{-1} W_{2,\lambda} \big] + \lambda^{-2n} \Delta_\lambda^{-1} V(X)^{-1} W_{3,\lambda} V(X)^{-\T} \Delta_\lambda^{-1}.
\end{equation*}
Since $\Delta_\lambda^{-1} = \mathcal{O}(\lambda^{n-1})$, we conclude that $A_\lambda = \mathcal{O}(\lambda^{-1})$.
Furthermore, from the assumption that $Y_m = (c, \ldots, c)$ for some $c \in \R$ it follows that $V(X)^{-1} Y_m = (c, 0, \ldots, 0) \in \R^n$.
Consequently, $\Delta_\lambda^{-1} V(X)^{-1} Y_m = (c, 0, \ldots, 0) = D_0$.
Therefore~\eqref{eq:data-fit-barthelme} yields
\begin{equation*}
  Y_m^\T K_\lambda(X, X)^{-1} Y_m = D_0^\T ( W + A_\lambda)^{-1} D_0 \to D_0^\T W^{-1} D_0 < \infty \quad \text{ as } \quad \lambda \to \infty,
\end{equation*}
where the Wronskian $W$ is, as noted earlier, non-singular.
\end{proof}

\section*{Acknowledgements}

TK was supported by the Academy of Finland postdoctoral researcher grant \#338567 ``Scalable, adaptive and reliable probabilistic integration''.
CJO was supported by the Alan Turing Institute, United Kingdom, and the Engineering \& Physical Sciences Research Council grant EP/WO19590/1.
We are grateful to Philipp Hennig, Motonobu Kanagawa, and Fran\c{c}ois-Xavier Briol for helpful general comments and to Simon Barthelm\'e and Konstantin Usevich for pointing out how to prove \Cref{thm:smooth-kernels}.
Comments by an anonymous reviewer served as inspiration for \Cref{sec:multiple-lengthscales,sec:other-parameters}.

\end{document}